\DeclareMathOperator{\codim}{codim}
\newcommand{\K}{{\mathbb{K}}}
\newcommand{\N}{{\mathbb{N}}}
\newcommand{\ol}{\overline}
\theoremstyle{definition}
\newtheorem{theorem}{Theorem}[section]%
\newtheorem{proposition}[theorem]{Proposition}%
\newtheorem{lemma}[theorem]{Lemma}
\newtheorem{corollary}[theorem]{Corollary}
\newtheorem{question}[theorem]{Question}
\newtheorem{example}[theorem]{Example}%
\newtheorem{remark}[theorem]{Remark}%
\newtheorem{definition}[theorem]{Definition}%
\begin{document}

\title[Comparability of the total Betti numbers of toric ideals of graphs]{Comparability of the total Betti numbers of toric ideals of graphs}

\author{Giuseppe Favacchio}
  \address[G. Favacchio]{Dipartimento di Ingegneria, Università degli studi di Palermo, Viale delle Scienze, Palermo, 90128, Italy}
 \email{giuseppe.favacchio@unipa.it}



	\thanks{\noindent{\bf Keywords:} Toric ideals, graphs, graded Betti numbers, edge contraction}
	\subjclass[2020]{13D02, 13P10, 13D40, 14M25, 05E40}

\thanks{\noindent{\bf Acknowledgements.}   The author  is member of the National Group for Algebraic and Geometrical Structures and their Applications (GNSAGA-INdAM). 
The results were inspired by
calculations using {\em CoCoA} \cite{cocoa}.}

\maketitle

\begin{abstract}
The total Betti numbers of the toric ideal of a simple graph are, in general, highly sensitive to any small change of the graph. In this paper we look at some combinatorial operations  that cause total Betti numbers to change in predictable ways. In particular, we focus on a procedure that preserves these invariants.
\end{abstract}

\section{Introduction}\label{s.intro}

Given a finite simple graph $G$ on the vertex
set $V = \{v_1,\ldots,v_n\}$ with edge set $E = \{e_1,\ldots,e_q\}$,
the {\it toric ideal of $G$}, denoted $I_G$, is the 
kernel of the map $\varphi : \K[E] = \K[e_1,\ldots,e_q] \rightarrow \K[v_1,\ldots,v_n]$ given by $\varphi(e_i) = v_{i_1}v_{i_2}$ where $e_i = \{v_{i_1},v_{i_2}\} \in E$.  
The standard graded $\K$-algebra $\K[E]/I_G$ will be denoted by $\K[G]$.

Properties of toric ideals of graphs have been extensively studied in recent years from several points of view, see \cite{ ballard2021, constantinescu2018,Adam2022,erey2021,FHKVT,FKVT20, robustness2022,ha2019algebraic,hibi2020, K-thesis,nandi2019,neves2022} just to cite some of them. Many questions remain unanswered, for instance it is not yet known a characterization of the graphs $G$ such that the algebra $\K[G]$ is Cohen-Macaulay, see Section~\ref{ss.hom inv} for the definition. More generally, it is important to explore the procedures which transform a graph $G$ into a new graph $G'$ such that the Betti numbers (see Section \ref{ss.hom inv}) of $K[G']$ are predictable from those of $K[G]$. 

We start by recalling the relevant background on the homological invariants, graphs and their toric ideals.
\subsection{Notation: Homological Invariants}\label{ss.hom inv}  Given an homogeneous ideal  $I$ in a polynomial ring $R$,  
the minimal graded free resolution of $R/I$ has the form
\[0 \rightarrow \bigoplus_{j \in \mathbb{N}} R(-j)^{\beta_{p,j}(R/I)}
\rightarrow \cdots \rightarrow \bigoplus_{j \in \mathbb{N}} R(-j)^{\beta_{1,j}(R/I)}
\rightarrow R \rightarrow R/I \rightarrow 0\]
where we recall that $R(-j)$ denotes the ring $R$ with its grading shifted by
$j$, and $\beta_{i,j}(R/I)=\dim_\K {\rm Tor}_i^R(R/I, \K)_j$ is called the ${i,j}$-th {\em graded Betti number} of $R/I$. The numbers $\beta_i(R/I)=\sum_j\beta_{i,j}(R/I)$ are called {\it total Betti numbers} of $R/I$. The algebra $R/I$ is {\it Cohen-Macaulay} (CM for short) if and only if its depth equals its (Krull) dimension.  

In this paper we are investigate some operations on graphs which have a {\it good behavior} with respect the total Betti numbers and the Cohen-Macaulay property.
It is known that, see \cite[Corollary 3.5]{ha2019algebraic}, if $H$ is a subgraph of $G$ then $\beta_{ij}(\K[H])\le \beta_{ij}(\K[G])$.  However, as noticed for instance in \cite[Example 5.3]{ha2019algebraic}, this inequality does not ensure the persistence of the Cohen-Macaulay property from $\K[G]$ to $\K[H]$.  Indeed, such phenomenon could happen if $\codim \K[H]< \codim \K[G]$.

\subsection{Notation: Graphs} 
Let $G=(V,E)$ be a simple graph and $x\in V$. We denote by $\mathcal N_G(x)=\{y \in V\ |\ \{x,y\}\in E \}$, the {\it neighbors set} of  $x$. We denote by $d_G(x)=|N_G(x)|.$

A {\it  walk} in a graph $G=(V,E)$  is a sequence of vertices $w=(x_0,x_1,x_2,\ldots, x_n)$  where $\{x_i,x_{i+1}\}\in E$ for $i= 0,\ldots, n-1$.   The {\it length} of the walk $w=(x_0,x_1,x_2,\ldots, x_n)$ is $n$ and it is denoted by $|w|$. 
 A  walk $w$  is even (odd) if $n$ is even (odd), i.e., it consists of an even (odd) number of edges.  A walk is closed if $x_0=x_n$.
Given a walk $w=(x_0,x_1,x_2,\ldots, x_n)$ it will be useful to work with the edges $e_i=\{x_{i-1},x_i\}$, so we will use the same terminology and we will also call {\it walk} the sequence $(e_1, \ldots e_n)$. With an abuse of notation we write $w=(e_1, \ldots e_n)=(x_0,x_1,x_2,\ldots, x_n)$. 

Let $w,w'$ be two walks in $G$, if the last vertex in $w$ is the fist vertex in $w'$, then the symbol  $w||w'$ denotes the concatenation of the two walks.  

We  say that a walk $p=(x_0,x_1,x_2,\ldots, x_n)$ is a {\it  path} if $d_G(x_i)=2$ for $i=1,\ldots, n-1,$ i.e. the vertices $x_1,\ldots, x_{n-1}$ don’t have neighbors outside of the walk.   
For a path $p=(x_0,x_1,\ldots , x_{t})$ we denote by $p^r$ the {\it reverse path} of $p$, that is $p^r=(x_{t}, \ldots,x_1, x_0)$. 

\subsection{Notation: Toric Ideals of Graphs}   We refer the reader to \cite[Section 5.3]{herzog2018binomial} for a more exhaustive overview of the topic.  Let $G=(V,E)$ be a simple graph. 
Given a  list of edges of $G$, say $w=(e_{{1}},e_{{2}},\dots, e_{{n}})$, we denote by~$e_w$ the monomial given by the product of all the edges in $w$, i.e., 
\[e_w=\prod\limits_{j} e_{j} \in \K[E].\]

We associate to an even list of edges $w=(e_{{1}},e_{{2}},\dots, e_{{2n}})$   two sub-lists which consist of the odd and even entries \[w^+=(e_{{1}},e_{{3}},\dots, e_{{2n-1}})\ \ \text{and}\ \ w^-=(e_{{2}},e_{{4}},\dots, e_{{2n}}).\]
Moreover, we set $f_{w}\in \K[W]$ to be the homogeneous binomial defined by 
\[f_w=e_{w^+} \ - \  e_{w^-}.\]

A set of generators of the toric ideal $I_G$, (which also constitutes a universal Gr\"obner basis  of $I_{G}$, see \cite[Proposition 10.1.10]{V}),  corresponds to the primitive closed even walks in $G$. Recall that a closed even walk $w$ in a graph $G$ (and the correspondent binomial $f_w$) is said to be   {\it primitive} if there is not another closed even walk $v$ in $G$ such that $e_{v^+} \mid_{\ e_{w^+}}$ and $e_{v^-} \mid_{\ e_{w^-}}$. 

\subsection{Results of the paper} 
We now summarize the content of the paper. 

In Section \ref{s. even path contraction} we recall the definition of path contractions in a graph and we introduce the simple path contractions (see Definition \ref{d. simple contraction}). We prove some preliminary lemmas, and we enunciate the following two results which provide a relation between the path contraction and the total Betti numbers of  graph. 

\noindent {\bf Theorem \ref{t. main 0}} {\it Let $G$ be a simple graph. Let $G/p$ be a even path contraction of $G$.   Then $$\beta_i(\K[G])\ge \beta_i(\K[G/p])\ \  for\ any\ i \ge 0.$$}

\noindent {\bf Theorem \ref{t. main 1}} {\it Let $G$ be a simple graph. Let $G/p$ be a even simple path contraction of $G$.  If  $p\subset q$ where $q$ is a path in $G$ such that $|q|\ge |p|+2$,  then
$$\beta_i(\K[G])= \beta_i(\K[G/p])\ \ for\ any\ i \ge 0.$$}
We derive several corollaries and we discuss significant examples related to these results.
The proof of Theorem \ref{t. main 1} is postponed to Section \ref{s. proof of main1} where the background knowledge on simplicial complexes is introduced.

In Section \ref{s.edge contraction} we begin  the study of edge contractions for a special class of graphs introduced in Definition \ref{d. connected by the edge e}. The main result of this section is the following.

\noindent {\bf Theorem \ref{t.main 2}} {\it
 Let $G=G_1\stackrel{e}{-}G_2$ be a graph connected by the edge $e$. Then $$\beta_1(\K[G])=\beta_1(\K[G/e]).$$}

\section{Path contractions in a graph and the total Betti numbers}\label{s. even path contraction}
We recall the well known definition from Graph Theory of contraction of an edge in a~graph.

\begin{definition}\label{d. edge contraction}
Let $G=(V,E)$ be a simple graph. Let $e=\{x_0,x_1\}\in E$ and $y_0$ a new vertex. Set $V'=(V\cup\{y_0\})\setminus e$. Let $\chi:V\to V'$ be the function which maps every vertex in $V\setminus e$ into itself and both $x_0$ and $ x_1$ into $y_0$.
Consider the set $E'=\{ \{\chi(x), \chi(x')\} \ | \ \{x,x' \}\in E\setminus\{e\} \}$, where eventual redundancies are ignored.  
The graph $(V',E')$, denoted by $G/e$,  is said to be an {\it edge contraction} of $G$. 

Let $p$ be a path in $G$. A {\it path contraction} of $G$, denoted by $G/p$, is the graph obtained by contracting all the edges in $p$.
We say that a path contraction is {\it even (odd)} if $p$ is an even (odd) path.
\end{definition}
The graph $G/p=(V',E')$ in the above definition is simple.

We  also need the following definition.
\begin{definition}\label{d. simple contraction}
Let $G=(V,E)$ be a simple graph. Let $p$ be a path in $G$ and $x$ and $x'$ be the first and the last vertices in $p$. We say that a path contraction is {\it simple} if $\mathcal N(x) \cap \mathcal N(x')\subseteq p$.
\end{definition}
In particular, a path of length 2, $p=(x,y,x'),$ is simple when $\mathcal N(x) \cap \mathcal N(x')=\{y\}$ and a path of length greater than 2  is simple when $\mathcal N(x) \cap \mathcal N(x')=\emptyset.$

\begin{example}
The walks in bold in Figure \eqref{fig.1} are not paths. 
 \begin{figure}[ht]
	\centering
	\subfloat[ ]{
		\begin{tikzpicture}[scale=0.4]
		\coordinate (v1) at (0,0);
		\coordinate (v2) at (-2,-2);
		\coordinate (v3) at (-2,2);
		\coordinate (v7) at (2,-2);
		\coordinate (v9) at (2,2);	
		\coordinate (v8) at (4,-2);
		\coordinate (v10) at (4,2);
		\coordinate (v4) at (6,0);
		\coordinate (v5) at (8,-2);	
		\coordinate (v6) at (8,2);
		\draw[line width=2] (v1)--(v2);
		\draw (v2)--(v3);
		\draw (v3)--(v1);
		\draw (v1)--(v7);
		\draw (v7)--(v8);
		\draw (v8)--(v4);
		\draw[line width=2]  (v4)--(v10);	
		\draw[line width=2] (v10)--(v9);
		\draw[line width=2] (v9)--(v1);	
		\fill[fill=white,draw =black] (v1) circle (0.3);
		\fill[fill=white,draw =black] (v2) circle (0.3);
		\fill[fill=white,draw =black] (v3) circle (0.3);	\fill[fill=white,draw =black] (v4) circle (0.3);
		\fill[fill=white,draw =black] (v7) circle (0.3);
		\fill[fill=white,draw =black] (v8) circle (0.3);
		\fill[fill=white,draw =black] (v9) circle (0.3);
		\fill[fill=white,draw =black] (v10) circle (0.3);
		\end{tikzpicture}
		\label{fig.1A}}		\qquad
 	\subfloat[ ]{
		\begin{tikzpicture}[scale=0.4]
		\coordinate (v1) at (0,0);
		\coordinate (v2) at (-2,-2);
		\coordinate (v3) at (-2,2);
		\coordinate (v7) at (2,-2);
		\coordinate (v9) at (2,2);	
		\coordinate (v8) at (4,-2);
		\coordinate (v10) at (4,2);
		\coordinate (v4) at (6,0);
		\coordinate (v5) at (8,-2);	
		\coordinate (v6) at (8,2);
		\draw (v1)--(v2);
		\draw (v2)--(v3);
		\draw (v3)--(v1);
		\draw[line width=2] (v1)--(v7);
		\draw  (v7)--(v8);
		\draw  (v8)--(v4);
		\draw   (v4)--(v10);	
		\draw  (v10)--(v9);
		\draw[line width=2] (v9)--(v1);	
		\fill[fill=white,draw =black] (v1) circle (0.3);
		\fill[fill=white,draw =black] (v2) circle (0.3);
		\fill[fill=white,draw =black] (v3) circle (0.3);	\fill[fill=white,draw =black] (v4) circle (0.3);
		\fill[fill=white,draw =black] (v7) circle (0.3);
		\fill[fill=white,draw =black] (v8) circle (0.3);
		\fill[fill=white,draw =black] (v9) circle (0.3);
		\fill[fill=white,draw =black] (v10) circle (0.3);
		\end{tikzpicture}
		\label{fig.1B}}		\qquad
 \caption{}
	\label{fig.1}
\end{figure}

The walks in bold in Figure \eqref{fig.2} are paths. In particular, the one in Figure \eqref{fig.2A} is simple and the one in Figure \eqref{fig.2B} is not simple.

\begin{figure}[ht]
	\centering
	\subfloat[ ]{
		\begin{tikzpicture}[scale=0.4]
		\coordinate (v1) at (0,0);
		\coordinate (v2) at (-2,-2);
		\coordinate (v3) at (-2,2);
		\coordinate (v7) at (2,-2);
		\coordinate (v9) at (2,2);	
		\coordinate (v8) at (4,-2);
		\coordinate (v10) at (4,2);
		\coordinate (v4) at (6,0);
		\coordinate (v5) at (8,-2);	
		\coordinate (v6) at (8,2);
		\draw (v1)--(v2);
		\draw (v2)--(v3);
		\draw (v3)--(v1);
		\draw (v1)--(v7);
		\draw (v7)--(v8);
		\draw (v8)--(v4);
		\draw[line width=2]  (v4)--(v10);	
		\draw[line width=2] (v10)--(v9);
		\draw[line width=2] (v9)--(v1);	
		\fill[fill=white,draw =black] (v1) circle (0.3);
		\fill[fill=white,draw =black] (v2) circle (0.3);
		\fill[fill=white,draw =black] (v3) circle (0.3);	\fill[fill=white,draw =black] (v4) circle (0.3);
		\fill[fill=white,draw =black] (v7) circle (0.3);
		\fill[fill=white,draw =black] (v8) circle (0.3);
		\fill[fill=white,draw =black] (v9) circle (0.3);
		\fill[fill=white,draw =black] (v10) circle (0.3);
		\end{tikzpicture}
		\label{fig.2A}}		\qquad
 	\subfloat[ ]{
		\begin{tikzpicture}[scale=0.4]
		\coordinate (v1) at (0,0);
		\coordinate (v2) at (-2,-2);
		\coordinate (v3) at (-2,2);
		\coordinate (v7) at (2,-2);
		\coordinate (v9) at (2,2);	
		\coordinate (v8) at (4,-2);
		\coordinate (v10) at (4,2);
		\coordinate (v4) at (6,0);
		\coordinate (v5) at (8,-2);	
		\coordinate (v6) at (8,2);
		\draw (v1)--(v2);
		\draw (v2)--(v3);
		\draw (v3)--(v1);
		\draw (v1)--(v7);
		\draw[line width=2]  (v7)--(v8);
		\draw[line width=2]  (v8)--(v4);
		\draw [line width=2] (v4)--(v10);	
		\draw[line width=2] (v10)--(v9);
		\draw (v9)--(v1);	
		\fill[fill=white,draw =black] (v1) circle (0.3);
		\fill[fill=white,draw =black] (v2) circle (0.3);
		\fill[fill=white,draw =black] (v3) circle (0.3);	\fill[fill=white,draw =black] (v4) circle (0.3);
		\fill[fill=white,draw =black] (v7) circle (0.3);
		\fill[fill=white,draw =black] (v8) circle (0.3);
		\fill[fill=white,draw =black] (v9) circle (0.3);
		\fill[fill=white,draw =black] (v10) circle (0.3);
		\end{tikzpicture}
		\label{fig.2B}}		\qquad

 \caption{}
	\label{fig.2}
\end{figure}

The graphs in Figure \eqref{fig.3} are obtained by contracting the paths in Figure \eqref{fig.2A} and Figure~\eqref{fig.2B}. The black dots in the figures represent the new vertex of the graph where the path  ``collapses''. 

\begin{figure}[ht]
	\centering
	\subfloat[ ]{
		\begin{tikzpicture}[scale=0.4]
		\coordinate (v1) at (0,0);
		\coordinate (v2) at (-2,-2);
		\coordinate (v3) at (-2,2);
		\coordinate (v7) at (2,-2);
		\coordinate (v9) at (2,2);	
		\coordinate (v8) at (4,-2);
		\coordinate (v10) at (4,2);
		\coordinate (v4) at (6,0);
		\coordinate (v5) at (8,-2);	
		\coordinate (v6) at (8,2);
		\draw (v1)--(v2);
		\draw (v2)--(v3);
		\draw (v3)--(v1);
		\draw (v1)--(v7);
		\draw (v7)--(v8);
		\draw (v8)--(v1);
		\fill[fill=black,draw =black] (v1) circle (0.3);
		\fill[fill=white,draw =black] (v2) circle (0.3);
		\fill[fill=white,draw =black] (v3) circle (0.3);	
		\fill[fill=white,draw =black] (v7) circle (0.3);
		\fill[fill=white,draw =black] (v8) circle (0.3);
		\end{tikzpicture}
		\label{fig.3A}}		\qquad
 	\subfloat[ ]{
		\begin{tikzpicture}[scale=0.4]
		\coordinate (v1) at (0,0);
		\coordinate (v2) at (-2,-2);
		\coordinate (v3) at (-2,2);
		\coordinate (v7) at (2,-2);
		\coordinate (v9) at (2,2);	
		\coordinate (v8) at (4,-2);
		\coordinate (v10) at (4,2);
		\coordinate (v4) at (6,0);
		\coordinate (v5) at (8,-2);	
		\coordinate (v6) at (8,2);
		\draw (v1)--(v2);
		\draw (v2)--(v3);
		\draw (v3)--(v1);
		\draw (v1)--(v7);
		\fill[fill=white,draw =black] (v1) circle (0.3);
		\fill[fill=white,draw =black] (v2) circle (0.3);
		\fill[fill=white,draw =black] (v3) circle (0.3);	
		\fill[fill=black,draw =black] (v7) circle (0.3);
		\end{tikzpicture}
		\label{fig.3B}}
 \caption{ }
	\label{fig.3}
\end{figure}

\end{example}

\begin{remark}\label{rem.simple even and new cycles} 
Note that an even simple path contraction maps even (odd) cycles of $G$ into even (odd) cycles of $G'$ and it does not create new cycles.  
\end{remark}

\begin{lemma}\label{l. codimension of a contraction}
    Let $G=(V,E)$ be a simple graph and $p$ be an even simple path in $G$. Then 
    \[\codim \K[G]=\codim \K[G/p].\]   
\end{lemma}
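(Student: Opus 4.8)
The plan is to reduce the statement to the combinatorial formula for the Krull dimension of an edge ring and then to check that the quantities entering that formula are unchanged by the contraction. Since $\K[E]$ is a polynomial ring in $|E(G)|$ variables and $I_G$ is its defining prime, one has $\codim \K[G] = |E(G)| - \dim \K[G]$. I would then invoke the classical formula (see \cite{V}) that $\dim \K[G] = |V(G)| - b(G)$, where $b(G)$ is the number of connected components of $G$ that are bipartite, i.e. that contain no odd cycle; recall that a connected edge ring has dimension $|V|$ when the component is non-bipartite and $|V|-1$ when it is bipartite. Hence $\codim \K[G] = |E(G)| - |V(G)| + b(G)$, and the lemma becomes the purely combinatorial identity
\[
|E(G)| - |V(G)| + b(G) = |E(G/p)| - |V(G/p)| + b(G/p).
\]

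First I would record the change in the vertex and edge counts. The path $p$ has $|p|+1$ vertices, all merged into the single new vertex $y_0$, so $|V(G/p)| = |V(G)| - |p|$. For the edges, I would split $E(G)$ into the $|p|$ edges of $p$, the edges meeting $p$ only at an endpoint $x_0$ or $x_n$, and the edges disjoint from $p$. The path edges disappear, while every other edge survives; the point to verify is that no two of them are identified and none collapses to a loop. This is exactly where the hypotheses are used: the internal vertices of $p$ have degree $2$, so a non-path edge can meet $p$ only at $x_0$ or $x_n$; the simple condition $\mathcal N(x_0)\cap\mathcal N(x_n)\subseteq p$ forbids a vertex outside $p$ from being joined to both endpoints, so no multi-edge is produced; and no loop is created, consistently with Remark \ref{rem.simple even and new cycles} (a chord $\{x_0,x_n\}$ would send an odd cycle to a loop). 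Thus $|E(G/p)| = |E(G)| - |p|$, and subtracting gives $|E(G/p)| - |V(G/p)| = |E(G)| - |V(G)|$.

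It remains to prove $b(G) = b(G/p)$, which I expect to be the crux. Contracting a path takes place inside a single connected component and keeps it connected, so the number of components is unchanged and there is a natural bijection between the components of $G$ and those of $G/p$. For a pair of corresponding components $C \leftrightarrow C'$ I would show that $C$ is bipartite if and only if $C'$ is, using Remark \ref{rem.simple even and new cycles}: each odd cycle of $C$ is carried to an odd cycle of $C'$, and conversely, since the contraction creates no new cycles, every cycle of $C'$ lifts to a cycle of $C$ of the same parity. Hence $C$ has an odd cycle precisely when $C'$ does, so bipartiteness is preserved component by component and $b(G) = b(G/p)$.

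Putting the two parts together yields $\codim \K[G] = |E(G)| - |V(G)| + b(G) = |E(G/p)| - |V(G/p)| + b(G/p) = \codim \K[G/p]$. The main obstacle is the middle bookkeeping: one must be sure that the even and simple hypotheses, through Remark \ref{rem.simple even and new cycles}, rule out precisely the degenerate identifications---multi-edges and loops---that would otherwise change $|E|$, and simultaneously preserve the parity of all cycles so that the count $b$ of bipartite components does not move. Once the Remark is available, both $|E|-|V|$ and $b$ are seen to be invariant, and the equality of codimensions follows.
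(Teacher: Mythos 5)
Your proof follows essentially the same route as the paper: both reduce to the formula $\codim \K[G]=|E|-|V|+(\text{bipartite correction})$ from \cite[Corollary 10.1.21]{V}, invoke Remark \ref{rem.simple even and new cycles} to see that bipartiteness is unchanged, and observe that $|E|$ and $|V|$ each drop by exactly $|p|$. Your write-up is merely more explicit about why no edges are identified or collapsed (which the paper leaves implicit) and about handling several connected components via the count $b(G)$, so it is a correct, slightly more detailed version of the same argument.
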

\begin{proof}
First note that, from Remark \ref{rem.simple even and new cycles}, the graphs $G=(V,E)$ and $G/p=(V',E')$ are both either bipartite or not bipartite. Since $V'=V\setminus p \cup \{y\},$ then, from \cite[Corollary 10.1.21]{V}, in the non bipartite case, we have 
$$\codim \K[G]=|E|-|V|=|E|-|p|-|V|+|p|=|E'|-|V'|=\codim \K[G/p].$$
A similar computation can be made in the bipartite case.
\end{proof}

From the next result we will have that an even simple path contraction do not produce ``new" primitive even closed walks. However, the next lemma does not require the assumption that the path is simple.

\begin{lemma}\label{l.contracting walks}
        Let $G$ be a simple graph. Let $p$ be an even path of $G$ and let $w$ be a primitive even closed walk in $G$.  Then $w/p$ is an even closed walk in $G/p$. On the other hand if $v$ is an even closed walk in $G/p$ then $v=w/p$ for some $w$ an even closed walk in $G$.
\end{lemma}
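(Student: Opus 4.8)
The plan is to describe explicitly how a closed walk meets the contracted path and then to verify the two assertions separately; the only delicate point is the preservation of parity, which is exactly where the evenness of $p$ and the degree hypothesis on its interior are used.

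For the first assertion I would fix a primitive even closed walk $w$ and look at the maximal subsequences of consecutive edges of $w$ all of which belong to $p$, i.e. to $\{x_0,x_1\},\dots,\{x_{n-1},x_n\}$. Since every interior vertex $x_1,\dots,x_{n-1}$ satisfies $d_G(x_i)=2$, once $w$ enters the interior of $p$ it can only continue along $p$ and can leave it only through an endpoint $x_0$ or $x_n$; hence each such subwalk begins and ends at a vertex of $\{x_0,x_n\}$. Because $p$ is a path of even length $n$, a walk inside $p$ joining two of these endpoints has length of the same parity as their distance along $p$, which is either $0$ or $n$ and thus even. The contraction collapses all of $x_0,\dots,x_n$ to the single vertex $y$: each such subwalk is replaced by $y$ and its (even number of) edges disappears, every remaining occurrence of $x_0$ or $x_n$ in $w$ is relabelled $y$, and every surviving edge $\{u,x_0\}$ or $\{x_n,z\}$ descends to an edge $\{u,y\}\in E'$. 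Therefore $w/p$ is a genuine closed walk in $G/p$ whose length is $|w|$ diminished by an even number, and so it is again even.

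The main obstacle is precisely this parity bookkeeping: I must guarantee that contraction removes an even number of edges of $w$. This is where both features enter — the degree-$2$ condition on the interior of $p$, which confines the interaction of $w$ with $p$ to the two endpoints and organises the removed edges into complete even-length passages through $p$, and the evenness of $|p|$, which makes each passage even. Primitivity guarantees in addition that $w$ does not traverse an edge of $p$ twice in immediate succession, so that these passages are honest traversals rather than back-and-forth motions; but the parity count itself would hold in either case.

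For the converse I would argue by explicit lifting. Let $v$ be an even closed walk in $G/p$. I trace $v$, leaving untouched every vertex and edge not involving $y$, since these descend from edges of $G$ disjoint from $p$. Each visit of $v$ to $y$ has the form $u\to y\to z$ with $\{u,y\},\{z,y\}\in E'$; by the construction of $E'$ the vertex $u$ is adjacent in $G$ to $x_0$ or to $x_n$, and likewise $z$. I replace such a visit by a walk in $G$ that goes from $u$ to the endpoint it is adjacent to, traverses $p$ from one endpoint to the other if the two chosen endpoints differ, and then continues to $z$. Each full traversal of $p$ inserts exactly $n$ edges, and $n$ is even, so the resulting closed walk $w$ has the same parity as $v$ and is therefore even; by construction $w/p=v$. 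Performing these local replacements at every passage through $y$ produces the required $w$, which need not be primitive, in agreement with the statement.
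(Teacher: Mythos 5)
Your proof is correct. The converse (lifting) direction is essentially identical to the paper's: replace each passage $u\to y\to z$ of $v$ through the new vertex by $x_0$, by $x_n$, or by a full traversal of $p$ or $p^r$ according to which endpoints $u$ and $z$ are adjacent to, and observe that each insertion adds $0$ or $n$ edges, both even. The forward direction differs mildly but genuinely from the paper's. The paper invokes the structure of primitive walks (each edge occurs at most twice, by \cite[Proposition 10.1.8]{V}) to enumerate exactly three cases --- $w$ meets $p$ in no edge, $w=p\|w'$, or $w=p\|w'\|p^r\|w''$ --- and contracts each explicitly; this buys the explicit decomposition that is reused later in the proof of Theorem \ref{t. main 0} (the partition $W_0\cup W_1\cup W_2$). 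You instead decompose $w$ into maximal runs of edges lying on $p$, use the degree-$2$ condition on the interior vertices to force each run to begin and end at $\{x_0,x_n\}$, and then use bipartiteness of the path graph together with $2\mid n$ to conclude that every run has even length, so contraction deletes an even number of edges. This is slightly more general --- as you note, it does not use primitivity at all, only that $w$ is a closed even walk (primitivity is needed only to exclude the degenerate case of a walk contained entirely in $p$, which you could mention) --- at the cost of not producing the explicit normal form of $w$ relative to $p$. Both arguments are sound.
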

\begin{proof} Set $p=(x_0,\ldots , x_{2t})$ and recall that we denote by $p^r$ the path $(x_{2t}, \ldots, x_0)$. Let $y\in V'$ be the new vertex and let $\chi:V\to V'$ be the function contracting the path $p$ into $y$. The following cases may be distinguished
 \begin{itemize}
    \item  $w$ and $p$ do not have edges in common. In this case $w/p=\chi(w);$ 
    \item   $w=p|| w'$, where $w'$ is an even walk connecting $x_{2t}$ with $x_{0}$, also $w'$ and $p$ do not have edges in common. Then $w/p=\chi(w')=$ is an even closed walk in $G/p$. 
    \item  $w=p|| w'||p^r||w''$, where $w',w''$ are odd walks both connecting $x_0$ and $x_{2t}$ and they have no edge in common with $p$. Then $w/p=\chi(w')||\chi(w'')$ is an even closed walk in $G/p$.
\end{itemize}
To prove the second part of the statement, let $v$ be a cycle in $G/p$ passing through $y=\chi(p)$, i.e., there are sub-paths of $v$ of the type $(x,y,x')$ where $x,x'\in \mathcal N(x_0) \cup \mathcal N(x_{2t+1}).$
 We explicitly construct a walk $w$ in $G$, by replacing the $y$ in the sequences $(x,y,x')$ contained in $v$ as follows
\begin{itemize}
    \item   if $x,x' \in \mathcal N(x_0)$ then we replace $y$ by $x_0$;   
    \item if $x,x' \in \mathcal N(x_{2t+1})$ then we replace $y$ by $x_{2t}$; 
    \item if  $x \in \mathcal N(x_0)$ and $x' \in \mathcal N(x_{2t})$ then we replace $y$ by $p$;
    \item if  $x' \in \mathcal N(x_0)$ and $x \in \mathcal N(x_{2t})$ then we replace $y$ by $p^r$.
\end{itemize}
     A straightforward calculation proves that the resulting walk $w$ is closed and even in $G$ and~$w/p=v.$  
\end{proof}

We are interested in how a simple even path contraction affects the total Betti numbers of a graph. We first show that  performing this procedure the total Betti  numbers can eventually only go down, then we look for cases where the equality holds. We prove a more general result which does not require that the path is simple.

\begin{theorem}\label{t. main 0}
Let $G$ be a simple graph. Let $G/p$ be a even path contraction of $G$.   Then $$\beta_i(\K[G])\ge \beta_i(\K[G/p])\ \  for\ any\ i \ge 0.$$
\end{theorem}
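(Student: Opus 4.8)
The natural tool is the subgraph monotonicity of \cite[Corollary 3.5]{ha2019algebraic}, so my first instinct is to realize $G/p$ as a subgraph of $G$. This fails on the nose: after contraction the new vertex is adjacent to the neighborhoods of \emph{both} endpoints of $p$, and no single vertex of $G$ sees all of those edges, so $G/p$ is not literally a subgraph. I would therefore argue at the level of the defining ideals, and I would first reduce to the case $|p|=2$. Writing $p=(x_0,\dots,x_{2t})$, contracting the leading length-two subpath $(x_0,x_1,x_2)$ (whose middle vertex has degree $2$, being interior to $p$) yields a graph in which the image of $p$ is again an even path of length $2t-2$ with the same degree-$2$ interior condition; iterating $t$ times exhibits $G/p$ as a chain of even length-two contractions, so it suffices to prove the inequality for $|p|=2$ and compose. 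This uses Remark \ref{rem.simple even and new cycles} to guarantee the intermediate graphs behave as expected.

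For $|p|=2$, let $f_1,f_2$ be the two edges of $p$. The key structural observation is that, since $I_G$ is prime, $\K[G]$ is a domain, and $\varphi(f_1)\neq\varphi(f_2)$, the linear form $f_1-f_2$ is a nonzerodivisor on $\K[G]$; quotienting the ambient polynomial ring by this regular linear form preserves all total Betti numbers and drops us into the polynomial ring on one fewer variable, with $f_1,f_2$ now both mapping to a single variable $z$. The plan is then to compare the image $J$ of $I_G$ under this specialization with $I_{G/p}$. Here Lemma \ref{l.contracting walks} does the combinatorial work: each primitive even closed walk of $G$ either avoids the degree-two vertex $x_1$, in which case its binomial specializes to $f_{w/p}$, or crosses the path, in which case it uses $f_1,f_2$ on opposite sides of the binomial and so specializes to $z\cdot f_{w/p}$. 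Together with the converse direction of Lemma \ref{l.contracting walks}, this matches the generators of $J$ with those of $I_{G/p}$ up to powers of $z$ (and, in the non-simple case governed by Definition \ref{d. edge contraction}, up to the identifications of edge variables coming from common neighbors of the two endpoints).

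The main obstacle is to pass from this generator-level dictionary to an inequality of \emph{all} $\beta_i$, $i\ge 0$. Setting $z=1$ recovers $\K[G/p]$ exactly, but this substitution is not graded, and quotienting $\bar{S}/J$ by $z-1$ need not be by a regular element, so there is no formal reason a priori for the Betti numbers to move monotonically; indeed both the surviving factors of $z$ and the non-simple edge identifications are precisely where the \emph{strict} drop in $\beta_i$ originates (for instance in the collapse $C_4\to K_2$). I expect the real content of the proof to be showing that each of these two operations — multiplying a subset of the generators by the new variable, and identifying edge variables that have become nonzerodivisor-incompatible with $J$ — can only preserve or decrease the total Betti numbers when performed on the specific ideals arising from graphs. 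Making this monotonicity precise, rather than the bookkeeping of Lemma \ref{l.contracting walks}, is the step I would allot the most care to, and it is where I would expect to need either a mapping-cone/long-exact-sequence comparison of the two resolutions or the homological input deferred to the proof of the equality case.
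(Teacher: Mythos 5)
Your setup coincides with the paper's own: reduce to $|p|=2$, pass to the quotient by the linear form $e_1-e_2$ (regular because $\K[G]$ is a domain and $\varphi(e_1)\neq\varphi(e_2)$), and use the walk dictionary to see that the specialized generators are $f_{w/p}$ for primitive walks avoiding $p$, $e_1f_{w/p}$ for walks crossing $p$ once, and $e_1^2f_{w/p}$ for walks crossing it twice. But you stop exactly at the decisive step and flag it as an open obstacle, so the argument is incomplete. The missing idea is not a mapping cone or a long exact sequence, and it is not a bespoke monotonicity statement about multiplying generators by a variable: it is to \emph{localize} rather than dehomogenize. Let $\mathfrak a=(e_3,\dots,e_N)$ be the prime generated by all edge variables other than the two in $p$. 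Localizing $\K[G]/(e_1-e_2)\K[G]$ at $\mathfrak a$ inverts the leftover variable $e_1$, turning $e_1f_{w/p}$ and $e_1^2f_{w/p}$ into $f_{w/p}$ up to units, and the resulting ideal in $\K(e_1)[E\setminus\{e_1,e_2\}]$ is checked (via Lemma \ref{l.contracting walks}) to be exactly the extension of $I_{G/p}$. Since localization is exact, the minimal free resolution of $\K[G]/(e_1-e_2)\K[G]$ localizes to a free resolution of $\K[G/p]$ (over the harmless field extension $\K(e_1)$, which does not change Betti numbers) that need no longer be minimal, and loss of minimality can only lower the ranks. This yields $\beta_i(\K[G])=\beta_i(\K[G]/(e_1-e_2)\K[G])\ge\beta_i(\K[G/p])$ formally, with no further homological comparison required; your observation that the surviving factors of $e_1$ are where the strict drop originates is correct, but the mechanism is precisely that those factors become units and kill minimality, not a separate argument to be supplied.
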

\begin{proof}
Without loss of generality {(since an even path is a concatenation of paths of length~2)} we assume $p=(e_1,e_2)=(x_1,x_2,x_3)$.  Let $\ell=e_2-e_1\in \K[E]$ be a linear form,  and  let $\mathfrak a=(e_3, \ldots, e_N)\subseteq \K[E]$ be the ideal generated by all the variables except  the two in $p$. We claim that 
$$(\K[G]/\ell \K[G])_{\mathfrak a}\cong \K[G/p],$$ i.e., the edge ring of  $G/p$ is isomorphic to the localization at $\mathfrak a$ of the quotient of $\K[G]$ by $\ell$.
  
The inequality in the statement is a direct consequence of the claim. In fact, the linear form  $\ell$ is regular in $\K[G]$ and the ideal $I_G$ defines a variety with positive dimension in $\mathbb P^{N-1}$. Thus, the operation of taking the quotient  by $\ell$ preserves the total Betti numbers (from the geometrical point of view it is a proper hyperplane section), and the localization can only make them go down.  Indeed, localization is an exact functor and a minimal free resolution remains exact upon localization at $\mathfrak a$. However, after the localization the resolution is not necessarily minimal anymore since some of the maps may have entries not belonging to $\mathfrak a$.  

In order to prove the claim, let $\mathcal G$ be a set of binomial generators, and a universal Gr\"obner basis, for $I_G$. The elements in $\mathcal G$ correspond, from \cite[Proposition 10.1.10]{V},  to the closed primitive even walks of $G.$ Notice that every closed primitive even walk of $G$ which includes either $e_1$ or $e_2$ must include both; moreover, see \cite[Proposition 10.1.8]{V}, it only might include $e_1$ and $e_2$ at most two times.
Thus, there is a natural partition of the set of closed primitive even walks of $G$, that is $W_0\cup W_1\cup W_2$, where the elements 
    in  $W_0$ do not involve $e_1$ and $e_2$;
    the elements in $W_1$  involve only once $e_1$ and $e_2$;
    the elements in $W_2$  involve twice $e_1$ and $e_2$.
The partition of the closed primitive even walks of $G$ is automatically inherited by $\mathcal G$. We set $\mathcal G=\mathcal G_0\cup\mathcal G_1\cup \mathcal G_2$,
where the set $\mathcal G_i$ contains the binomials corresponding to the cycles in $W_i.$

Consider the following isomorphisms  $$ \sfrac{\K[G]}{\ell \K[G]}\cong \sfrac{K[E]}{I_G+(e_2-e_1)}\cong \dfrac{\sfrac{\K[E]}{(e_2-e_1)}}{ \sfrac{I_G+(e_2-e_1)}{(e_2-e_1)}} \cong \sfrac{\K[E\setminus\{e_2\}]}{I'}.$$

The ideal $I'$ is minimally generated by the binomials in $\mathcal G'=\mathcal G_0\cup\mathcal G'_1\cup \mathcal G'_2$
where $\mathcal G_0$ is the same as above, the set $\mathcal G_1'$ contains binomials of the form $e_1f_w$, with $(e_1,e_2)||w\in W_1$; the set $\mathcal G'_2$ contains binomials of the form $e_1^2(f_{w||w'})$ with $(e_1,e_2)||w||(e_2,e_1)||w'\in W_2.$ 

Thus, denoted by $\mathfrak a'=(e_3, \ldots, e_N)\subseteq \K[E\setminus\{e_2\}]$ we have
$$ \left(\sfrac{\K[G]}{\ell \K[G]}\right)_{\mathfrak a}\cong \left(\sfrac{\K[E\setminus\{e_2\}]}{I'}\right)_{\mathfrak a'}\cong \sfrac{\K(e_1)[E\setminus\{e_1, e_2\}]}{I''}.$$
The field $\K(e_1)$ is the extension over $\K$ generated by $e_1$.
Localizing, the form $e_1$ becomes a unit, so the ideal  $I''$ in $\K(e_1)[E\setminus\{e_1,e_2\}]$ is (not necessarily minimally) generated by the binomials in $\mathcal G''=\mathcal G_0\cup\mathcal G''_1\cup \mathcal G''_2$
where $\mathcal G_0$ is again the same as above and the set $\mathcal G_1'$ contains binomials of the form $f_w$, with $(e_1,e_2)||w\in W_1$; the set $\mathcal G'_2$ contains binomials of the form $(f_{w||w'})$ with $(e_1,e_2)||w||(e_2,e_1)||w'\in W_2.$ Since $e_1$ does not appears in the elements of the set $\mathcal G''$,  the same binomials define an ideal in $[E\setminus\{e_1, e_2\}]$.    
Such ideal, from the description of $\mathcal G''$ and by Lemma \ref{l.contracting walks}, is precisely $I_{G/p}$.\end{proof}

\begin{corollary}\label{c. not CM contraction} Let $G$ be a simple graph. Let $p$ be a even simple path of $G$. 
    If $\K[G/p]$ is not Cohen-Macaulay then $\K[G]$  is not Cohen-Macaulay.
\end{corollary}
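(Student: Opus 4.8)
The plan is to rephrase the Cohen--Macaulay property in terms of the two invariants that the preceding results control: the projective dimension and the codimension. Recall that for a homogeneous ideal $I$ in a polynomial ring $R$, the Auslander--Buchsbaum formula gives $\pd(R/I)=\depth R-\depth(R/I)$, while one always has $\codim I\le \pd(R/I)$; consequently $R/I$ is Cohen--Macaulay precisely when $\pd(R/I)=\codim I$. Moreover $\pd(R/I)$ is exactly the largest index $i$ with $\beta_i(R/I)\neq 0$. With this reformulation in hand, the statement becomes a comparison of the quantities $\pd$ and $\codim$ for $\K[G]$ and $\K[G/p]$.

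First I would observe that Theorem \ref{t. main 0} forces $\pd(\K[G])\ge \pd(\K[G/p])$. Indeed, if $i$ is the top index for which $\beta_i(\K[G/p])\neq 0$, then the inequality $\beta_i(\K[G])\ge \beta_i(\K[G/p])$ from that theorem shows $\beta_i(\K[G])\neq 0$ as well, so the projective dimension cannot drop under the contraction. Next, since $p$ is an \emph{even simple} path, Lemma \ref{l. codimension of a contraction} yields the equality $\codim \K[G]=\codim \K[G/p]$; this is precisely where the simplicity hypothesis (beyond what Theorem \ref{t. main 0} needs) is used.

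Combining these facts with the hypothesis that $\K[G/p]$ is not Cohen--Macaulay, which by the reformulation means $\pd(\K[G/p])>\codim \K[G/p]$, I obtain
\[
\pd(\K[G])\ \ge\ \pd(\K[G/p])\ >\ \codim \K[G/p]\ =\ \codim \K[G],
\]
so that $\pd(\K[G])>\codim \K[G]$, and hence $\K[G]$ is not Cohen--Macaulay.

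There is essentially no hard step here: the substantive content is entirely packaged into Theorem \ref{t. main 0} and Lemma \ref{l. codimension of a contraction}. The only point requiring a little care is the reformulation of Cohen--Macaulayness as the equality $\pd=\codim$, which rests on the standard inequality $\codim I\le \pd(R/I)$ together with Auslander--Buchsbaum. Once this is recorded, the conclusion follows immediately by juxtaposing the two comparisons.
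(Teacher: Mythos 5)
Your proposal is correct and follows exactly the paper's route: the paper's proof is the one-line "It follows from Lemma \ref{l. codimension of a contraction} and Theorem \ref{t. main 0}," and your argument is precisely the expansion of that, via the standard reformulation of Cohen--Macaulayness as $\pd=\codim$.
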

\begin{proof}
It follows from Lemma \ref{l. codimension of a contraction} and Theorem \ref{t. main 0}.
\end{proof}
We will show in Examples \ref{e. contr 3->1} and \ref{e. contr 2->0} that the converse of Corollary \ref{c. not CM contraction} is false, i.e., for an even simple path $p$ of $G$, having $\K[G]$  not Cohen-Macaulay does not necessarily imply that $\K[G/p]$ is Cohen-Macaulay.

It would be interesting to find extra hypotheses in Theorem \ref{t. main 0} to guarantee the equality of the total Betti number. In this direction we have the following result. A combinatorial proof of it is given in Section~\ref{s. proof of main1}.

\begin{theorem}\label{t. main 1}
Let $G$ be a simple graph. Let $G/p$ be a even simple path contraction of $G$.  If  $p\subset q$ where $q$ is a path in $G$ such that $|q|\ge |p|+2$,  then
$$\beta_i(\K[G])= \beta_i(\K[G/p])\ \ for\ any\ i \ge 0.$$
\end{theorem}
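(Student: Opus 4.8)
Since Theorem \ref{t. main 0} already gives $\beta_i(\K[G])\ge\beta_i(\K[G/p])$ for every $i$, it suffices to prove the reverse inequality; I would in fact establish equality multidegree by multidegree. First I would reduce to the case $|p|=2$, exactly as in the proof of Theorem \ref{t. main 0}: writing the even path $p=(x_1,\dots,x_{2t+1})$ as a concatenation of length-$2$ subpaths and contracting one piece at a time. Each piece is again an even simple path, and since $|q|\ge|p|+2\ge 4$ the shortened ambient path $q/p_1$ still has length $\ge|p'|+2$, so the hypotheses propagate (verifying this is a routine but necessary check). Thus I may assume $p=(x_1,x_2,x_3)$ with $e_1=\{x_1,x_2\}$, $e_2=\{x_2,x_3\}$ and $\mathcal N(x_1)\cap\mathcal N(x_3)=\{x_2\}$. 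The force of the bound $|q|\ge|p|+2$ is that after contraction the new vertex $y$ still has a neighbour $x_4$ with $d_{G/p}(x_4)=2$, inherited from the surviving part of $q$; this degree-$2$ \emph{buffer} adjacent to $y$ is the device on which the whole argument rests, and it is precisely what fails when $q=p$ or $|q|=|p|+1$.

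The main tool would be the description of the multigraded Betti numbers by simplicial homology set up in Section \ref{s. proof of main1} (see also \cite{herzog2018binomial}). Identifying $\mathbf b\in\N^V$ with the monomial $x^{\mathbf b}=\prod_v v^{b_v}$, let $S\subseteq\K[V]$ denote the monomials lying in $\im\varphi$, and for $x^{\mathbf b}\in S$ put $\Delta_{\mathbf b}=\{\sigma\subseteq E:\ x^{\mathbf b}/\prod_{e\in\sigma}\varphi(e)\in S\}$; then $\beta_{i,\mathbf b}(\K[G])=\dim_\K\tilde H_{i-1}(\Delta_{\mathbf b};\K)$ and $\beta_i(\K[G])=\sum_{\mathbf b}\dim_\K\tilde H_{i-1}(\Delta_{\mathbf b})$. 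Let $\psi\colon\K[V]\to\K[V']$ be the ring map identifying $x_1,x_2,x_3$ to $y$, so that $\varphi'=\psi\circ\varphi$ on the surviving edges $E'=E\setminus\{e_1,e_2\}$, and let $\psi_*$ be the induced map on degrees. I would then prove: (i) for every degree $\mathbf b$ with $\tilde H_{*}(\Delta_{\mathbf b})\ne 0$ (a \emph{supporting degree}) the complexes $\Delta_{\mathbf b}^{G}$ and $\Delta_{\psi_*(\mathbf b)}^{G/p}$ have isomorphic reduced homology, and (ii) $\psi_*$ is a bijection between the supporting degrees of $G$ and of $G/p$. Summing $\dim_\K\tilde H_{i-1}$ over all degrees then yields $\beta_i(\K[G])=\beta_i(\K[G/p])$, which is even stronger than what is asked.

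For (ii), surjectivity is the content of the second half of Lemma \ref{l.contracting walks} (every even closed walk of $G/p$ lifts to one of $G$), while injectivity uses Remark \ref{rem.simple even and new cycles}: since an even simple contraction creates no new cycles and preserves parities, two distinct supporting degrees of $G$ cannot be glued by $\psi_*$. For (i), the faces of $\Delta_{\mathbf b}^{G}$ containing $e_1$ or $e_2$ are exactly those recording a traversal of the bridge $p$, and because $d_G(x_2)=2$ the $x_2$-coordinate of any representation of $\mathbf b$ is split between $e_1$ and $e_2$. Using this together with the buffer edge $\{x_3,x_4\}$ I would construct an explicit discrete Morse matching that collapses $\Delta_{\mathbf b}^{G}$ onto a copy of $\Delta_{\psi_*(\mathbf b)}^{G/p}$, pairing the cells that carry $e_1,e_2$ so that none of them is left critical. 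The buffer guarantees that this pairing is total: were $x_4$ of degree $>2$ (the case $|q|\le|p|+1$), some cell carrying the bridge would remain unmatched and a homology class could appear or disappear, which is exactly the strict drop left open by Theorem \ref{t. main 0}.

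I expect step (i) to be the crux. The combinatorics of the reduction to $|p|=2$, the degree bijection (ii), and the final summation are essentially bookkeeping once the homology formula is in place; by contrast, making the collapse of the cells carrying $e_1$ and $e_2$ precise, and proving rigorously that it is the hypothesis $|q|\ge|p|+2$ (equivalently, the survival of a degree-$2$ neighbour of $y$) that renders the matching complete and hence homology-preserving, is the delicate point that the whole theorem turns on.
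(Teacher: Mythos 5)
Your overall strategy matches the paper's: invoke Theorem \ref{t. main 0} for the inequality $\ge$, reduce to $|p|=2$, and compare the two algebras multidegree by multidegree via the simplicial complexes $\Delta_s$ and the Briales--Campillo--Mariju\'an--Pis\'on formula. But the step you yourself identify as the crux --- the isomorphism $\tilde H_*(\Delta_{\mathbf b}^G)\cong \tilde H_*(\Delta_{\psi_*(\mathbf b)}^{G/p})$ --- is exactly the part you do not carry out: ``I would construct an explicit discrete Morse matching'' is a plan, not a proof, and it is not clear it can be executed as described, since $\Delta_{\psi_*(\mathbf b)}^{G/p}$ lives on a different vertex set and is not a subcomplex of $\Delta_{\mathbf b}^G$ onto which one could collapse. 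The paper closes this gap by a different mechanism: first, two acyclicity lemmas (Lemmas \ref{l.acyc1} and \ref{l.acyc2}, both cone arguments) show that any degree $s$ with nonvanishing homology must have $w_2=w_3=w_4=w$ with $w\le w_1$ and $w\le w_5$, which forces every preimage of $s$ to have the rigid form $ae_1+(w-a)e_2+ae_3+(w-a)e_4+\bar e$; this yields the structural decomposition $\Delta_s^G=\{1,3\}*\Delta_1\cup\{2,4\}*\Delta_2\cup\{1,2,3,4\}*\Delta_3$ and $\Delta_{s'}^{G'}=\{i\}*\Delta_1\cup\{j\}*\Delta_2\cup\{i,j\}*\Delta_3$ with the \emph{same} complexes $\Delta_1,\Delta_2,\Delta_3$; finally a Mayer--Vietoris computation (Proposition \ref{p. m-v case 3}, built on Lemma \ref{l.m-v}) shows the homology of such a union of cones does not depend on the apex sets. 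None of these three ingredients appears in your proposal, and without them the homology comparison is unsubstantiated.

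Two further points. First, your step (ii) is supported by citations that do not prove it: Lemma \ref{l.contracting walks} and Remark \ref{rem.simple even and new cycles} concern walks and cycles, not the set of degrees with nonzero reduced homology, so neither injectivity nor surjectivity of $\psi_*$ on supporting degrees follows from them. In the paper the injectivity issue disappears as a byproduct of the acyclicity lemmas (a supporting degree is determined by $w_1,w,w_5,\bar x$, hence recoverable from its image), and surjectivity is never needed because only the inequality $\le$ is proved directly, with Theorem \ref{t. main 0} supplying the converse --- which is the economical route you announce in your first sentence but then abandon in favor of a full bijection. Second, you locate the force of $|q|\ge|p|+2$ in a single degree-$2$ neighbour of $y$ on one side, whereas the paper's proof places one edge of $q$ on \emph{each} side of $p$ and uses both bounds $w\le w_1$ and $w\le w_5$; your one-sided configuration is not the one the paper treats, and your argument for it is not developed far enough to tell whether it would go through.
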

The next corollary collects immediate consequences of Theorem \ref{t. main 1}.
\begin{corollary}
Let $G$ be a simple graph. Let $p$ be a even simple path of $G$ such that $p\subset q$ where $q$ is a path in $G$ such that $|q|\ge |p|+2$. Then
\begin{itemize}
    \item[(i)] $\K[G]$ is Cohen-Macaulay if and only if $\K[G/p]$ is Cohen-Macaulay;
    \item[(ii)] $\K[G]$ is Gorenstein if and only if $\K[G/p]$ is Gorenstein;
\end{itemize}    
\end{corollary}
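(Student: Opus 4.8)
The plan is to derive both equivalences purely formally from the two facts already in hand: Theorem~\ref{t. main 1}, which says the entire sequence of total Betti numbers is unchanged, $\beta_i(\K[G])=\beta_i(\K[G/p])$ for all $i$, and Lemma~\ref{l. codimension of a contraction}, which says $\codim\K[G]=\codim\K[G/p]$. The key point is that ``Cohen--Macaulay'' and ``Gorenstein'' can each be read off from exactly this numerical data, so the corollary should be essentially a matter of packaging.

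First I would treat (i). Writing $\K[G]=\K[E]/I_G$ and $\K[G/p]=\K[E']/I_{G/p}$, each is a standard graded quotient of a polynomial ring over $\K$, so the Auslander--Buchsbaum formula gives $\pd S+\depth S=\dim(\text{ambient ring})$ for $S\in\{\K[G],\K[G/p]\}$. Consequently $S$ is Cohen--Macaulay, i.e.\ $\depth S=\dim S$, if and only if $\pd S=\codim S$. Now $\pd S=\max\{i:\beta_i(S)\neq 0\}$ is determined by the total Betti numbers, so Theorem~\ref{t. main 1} yields $\pd\K[G]=\pd\K[G/p]$, while Lemma~\ref{l. codimension of a contraction} yields $\codim\K[G]=\codim\K[G/p]$. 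Hence $\pd\K[G]=\codim\K[G]$ holds if and only if $\pd\K[G/p]=\codim\K[G/p]$ holds, which is precisely (i).

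For (ii) I would use that (i) already makes the two algebras Cohen--Macaulay simultaneously, together with the fact that a graded Cohen--Macaulay $\K$-algebra is Gorenstein exactly when its Cohen--Macaulay type equals $1$. Since the type of a Cohen--Macaulay quotient is the rank of the last free module in its minimal resolution, namely $\beta_{\pd S}(S)$, and since both $\pd$ and all total Betti numbers agree, we get $\beta_{\pd\K[G]}(\K[G])=\beta_{\pd\K[G/p]}(\K[G/p])$. Thus the type of one algebra is $1$ if and only if the type of the other is, and combined with (i) this gives the Gorenstein equivalence in both directions.

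The statement is therefore mostly bookkeeping, and the only subtlety worth flagging is that $\K[G]$ and $\K[G/p]$ sit inside \emph{different} polynomial rings (the ambient ring loses one variable under each length-$2$ contraction). This causes no trouble because every characterization invoked---$S$ Cohen--Macaulay $\iff\pd S=\codim S$, and $S$ Gorenstein $\iff S$ Cohen--Macaulay with $\beta_{\pd S}(S)=1$---is intrinsic to $S$ and compatible with the way $\pd$, $\codim$, and the top Betti number are transported by Theorem~\ref{t. main 1} and Lemma~\ref{l. codimension of a contraction}. So there is no genuine obstacle inside the corollary itself; all the real work lives upstream, in the proof of Theorem~\ref{t. main 1}.
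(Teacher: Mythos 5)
Your argument is correct and matches what the paper intends: the corollary is stated there without proof as an ``immediate consequence'' of Theorem~\ref{t. main 1}, and your write-up simply makes the standard bookkeeping explicit (Cohen--Macaulayness via Auslander--Buchsbaum as $\pd=\codim$, using Lemma~\ref{l. codimension of a contraction} for the codimension, and Gorensteinness via type $=\beta_{\pd}=1$). You correctly flag the only point needing care --- that the two algebras live in different ambient polynomial rings --- and your resolution of it is sound.
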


\begin{remark} The graph $G$ in Figure \eqref{fig.contr1A} consists of a path $q$ of length at least $4$ and a dashed part that could be any simple graph.
The path $p$ consists of any two consecutive vertices of $q$. The graph in Figure \eqref{fig.contr1B} is $G/p$ obtained by  contracting  $p$. 
From Theorem \ref{t. main 1} the two graphs in Figure \eqref{fig.contr1} have the same total Betti numbers.

		\begin{figure}[ht]
	\centering
	\subfloat[ ]{
		\begin{tikzpicture}[scale=0.35]
		\coordinate (v0) at (-2,1.5);
		\coordinate (v1) at (0,0-1);
		\coordinate (v2) at (0,4);
		\coordinate (v3) at (3,-1);
		\coordinate (v4) at (3,4);	
		\coordinate (v5) at (6,5);
		\coordinate (v6) at (6,3);
		\coordinate (v7) at (6,1);
		\coordinate (v8) at (6,-1);
		\coordinate (v9) at (6,-3);	
		\draw[line width=2] (v1)--(v0);
		\draw (v0)--(v2);
		\draw[line width=2] (v1)--(v3);
		\draw (v2)--(v4);
		\draw[dotted] (v4)--(v5);
		\draw[dotted] (v4)--(v6);
		\draw[dotted] (v4)--(v7);
		\draw[dotted] (v3)--(v7);
		\draw[dotted] (v3)--(v8);
		\draw[dotted] (v3)--(v9);
		
		\fill[fill=white,draw =black] (v0) circle (0.3);
		\fill[fill=white,draw =black] (v1) circle (0.3);
		\fill[fill=white,draw =black] (v2) circle (0.3);
		\fill[fill=white,draw =black] (v3) circle (0.3);
		\fill[fill=white,draw =black] (v4) circle (0.3);
		\end{tikzpicture}
		\label{fig.contr1A}}		\qquad
	\subfloat[ ]{
		\begin{tikzpicture}[scale=0.35]
\coordinate (v0) at (0,1.5);
\coordinate (v1) at (0,0-1);
\coordinate (v2) at (0,4);
\coordinate (v3) at (3,-1);
\coordinate (v4) at (3,4);	
\coordinate (v5) at (6,5);
\coordinate (v6) at (6,3);
\coordinate (v7) at (6,1);
\coordinate (v8) at (6,-1);
\coordinate (v9) at (6,-3);	
\draw (v0)--(v3);
\draw (v0)--(v4);

\draw[dotted] (v4)--(v5);
\draw[dotted] (v4)--(v6);
\draw[dotted] (v4)--(v7);
\draw[dotted] (v3)--(v7);
\draw[dotted] (v3)--(v8);
\draw[dotted] (v3)--(v9);

\fill[fill=white,draw =black] (v0) circle (0.3);
\fill[fill=black,draw =black] (v3) circle (0.3);
\fill[fill=white,draw =black] (v4) circle (0.3);
		\end{tikzpicture}
		
		\label{fig.contr1B}}
	\caption{Contraction of  a path of length two contained in a simple path of length four.}
	\label{fig.contr1}
\end{figure}
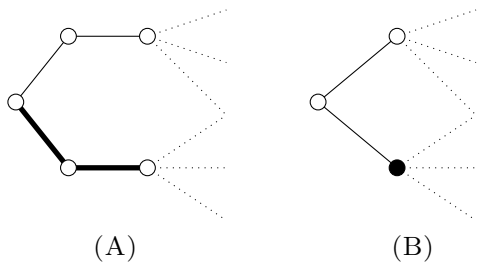

\end{remark}
Does an analogous of Theorem \ref{t. main 1} exist with odd contractions? In the next example we show that the answer to such question is in general negative. Indeed, we give an example of an odd simple path contraction which leads to non comparable Betti numbers.

\begin{example}\label{e. contr 3->2}
Consider the graphs $G$ and $G'$ in Figure \eqref{fig.oddcontr1}. Note that $G'$ has a path $q$ of length $3$ and $G'$ is obtained by  $G$ by contracting an edge in $q$.

		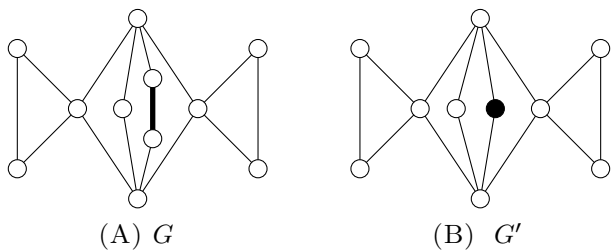
\begin{figure}[ht]
	\centering
	\subfloat[$G$]{
		\begin{tikzpicture}[scale=0.4]
		\coordinate (v1) at (0,0);
		\coordinate (v2) at (0,4);
		\coordinate (v3) at (2,2);
		\coordinate (v4) at (4,5);
		\coordinate (v5) at (4,-1);	
		\coordinate (v6) at (6,2);
		\coordinate (v7) at (8,0);
		\coordinate (v8) at (8,4);
		\coordinate (v9) at (3.5,2);
		\coordinate (v10) at (4.5,3);	
		\coordinate (v11) at (4.5,1);
		\draw (v1)--(v2);
		\draw (v3)--(v2);
		\draw (v1)--(v3);
		\draw (v3)--(v4);
		\draw (v3)--(v5);
		\draw (v5)--(v6);
		\draw (v4)--(v6);
		\draw (v7)--(v6);	
		\draw (v8)--(v6);
		\draw (v7)--(v8);	
		
		\draw (v4)--(v9);
		\draw (v9)--(v5);
		
		\draw (v4)--(v10);
		\draw[line width=2] (v11)--(v10);
		\draw (v5)--(v11);
		\fill[fill=white,draw =black] (v1) circle (0.3);
		\fill[fill=white,draw =black] (v2) circle (0.3);
		\fill[fill=white,draw =black] (v3) circle (0.3);	\fill[fill=white,draw =black] (v4) circle (0.3);
		\fill[fill=white,draw =black] (v5) circle (0.3);
		\fill[fill=white,draw =black] (v6) circle (0.3);
		\fill[fill=white,draw =black] (v7) circle (0.3);
		\fill[fill=white,draw =black] (v8) circle (0.3);
		\fill[fill=white,draw =black] (v9) circle (0.3);
		\fill[fill=white,draw =black] (v10) circle (0.3);
		\fill[fill=white,draw =black] (v11) circle (0.3);
		\end{tikzpicture}
		\label{fig.oddcontr1A}}		\qquad
	\subfloat[ $G'$]{
		\begin{tikzpicture}[scale=0.4]
		\coordinate (v1) at (0,0);
		\coordinate (v2) at (0,4);
		\coordinate (v3) at (2,2);
		\coordinate (v4) at (4,5);
		\coordinate (v5) at (4,-1);	
		\coordinate (v6) at (6,2);
		\coordinate (v7) at (8,0);
		\coordinate (v8) at (8,4);
		\coordinate (v9) at (3.2,2);
		\coordinate (v10) at (4.5,2);	
		\draw (v1)--(v2);
		\draw (v3)--(v2);
		\draw (v1)--(v3);
		\draw (v3)--(v4);
		\draw (v3)--(v5);
		\draw (v5)--(v6);
		\draw (v4)--(v6);
		\draw (v7)--(v6);	
		\draw (v8)--(v6);
		\draw (v7)--(v8);	
		
		\draw (v4)--(v9);
		\draw (v9)--(v5);
		
		\draw (v4)--(v10);
		\draw (v5)--(v10);
		\fill[fill=white,draw =black] (v1) circle (0.3);
		\fill[fill=white,draw =black] (v2) circle (0.3);
		\fill[fill=white,draw =black] (v3) circle (0.3);	\fill[fill=white,draw =black] (v4) circle (0.3);
		\fill[fill=white,draw =black] (v5) circle (0.3);
		\fill[fill=white,draw =black] (v6) circle (0.3);
		\fill[fill=white,draw =black] (v7) circle (0.3);
		\fill[fill=white,draw =black] (v8) circle (0.3);
		\fill[fill=white,draw =black] (v9) circle (0.3);
		\fill[fill=black,draw =black] (v10) circle (0.3);
		\end{tikzpicture}
		
		\label{fig.oddcontr1B}}
	\caption{The graph $G'$ in Example \ref{e. contr 3->2} is an edge contraction of $G$.}
	\label{fig.oddcontr1}
\end{figure}

A computation with CoCoA shows that the total Betti numbers are not comparable,
$$\begin{array}{r|ccccc}
    i: &  0 &  1 &  2 &  3 &  4\\
\hline
\beta_i(\K[G]) & 1   & 8 & 18 & 16 & 5 \\[10pt]
\hline
\beta_i(\K[G']) & 1   & 9 & 19 & 9 & 1 \\[10pt]
\end{array}.
$$
\end{example}

It is also natural to ask if an analogous of Theorem \ref{t. main 1} holds with an even contraction and a weaker assumption on $q$. For instance if either $|q|=|p|+1$ (see Figure \eqref{fig.contr2}) or $p$ is a maximal path and $q=p$ (see Figure \eqref{fig.contr3}).   In the next examples we show that in these cases the equality does not necessarily hold anymore.  These examples also show that an even contraction may produce a Cohen-Macaulay graph from a non Cohen-Macaulay one.

		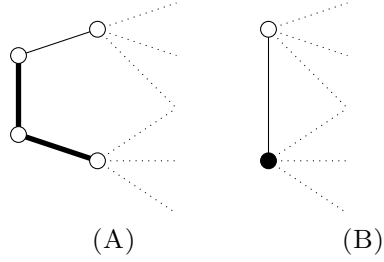
\begin{figure}[ht]
	\centering
	\subfloat[ ]{
			\begin{tikzpicture}[scale=0.35]
		\coordinate (v1) at (0,0);
		\coordinate (v2) at (0,3);
		\coordinate (v3) at (3,-1);
		\coordinate (v4) at (3,4);	
		\coordinate (v5) at (6,5);
		\coordinate (v6) at (6,3);
		\coordinate (v7) at (6,1);
		\coordinate (v8) at (6,-1);
		\coordinate (v9) at (6,-3);	
		\draw[line width=2] (v1)--(v2);
		\draw[line width=2] (v1)--(v3);
		\draw (v2)--(v4);
		\draw[dotted] (v4)--(v5);
		\draw[dotted] (v4)--(v6);
		\draw[dotted] (v4)--(v7);
		\draw[dotted] (v3)--(v7);
		\draw[dotted] (v3)--(v8);
		\draw[dotted] (v3)--(v9);
		
		\fill[fill=white,draw =black] (v1) circle (0.3);
		\fill[fill=white,draw =black] (v2) circle (0.3);
		\fill[fill=white,draw =black] (v3) circle (0.3);
		\fill[fill=white,draw =black] (v4) circle (0.3);
		\end{tikzpicture}
					\label{fig.contr2A}}		\qquad
	\subfloat[ ]{
			\begin{tikzpicture}[scale=0.35]
		\coordinate (v3) at (3,-1);
		\coordinate (v4) at (3,4);	
		\coordinate (v5) at (6,5);
		\coordinate (v6) at (6,3);
		\coordinate (v7) at (6,1);
		\coordinate (v8) at (6,-1);
		\coordinate (v9) at (6,-3);	
		\draw (v3)--(v4);
		\draw[dotted] (v4)--(v5);
		\draw[dotted] (v4)--(v6);
		\draw[dotted] (v4)--(v7);
		\draw[dotted] (v3)--(v7);
		\draw[dotted] (v3)--(v8);
		\draw[dotted] (v3)--(v9);
		
		\fill[fill=black,draw =black] (v3) circle (0.3);
		\fill[fill=white,draw =black] (v4) circle (0.3);
		\end{tikzpicture}
		
		\label{fig.contr2B}}
	\caption{Contraction of  a path of length two in a simple path of length~three.}
	\label{fig.contr2}
\end{figure}

		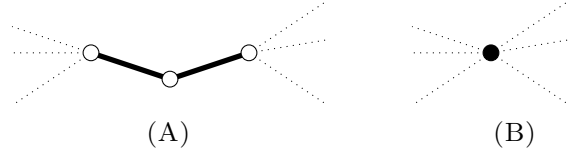
\begin{figure}[ht]
	\centering
	\subfloat[ ]{
		\begin{tikzpicture}[scale=0.35]
		\coordinate (v1) at (0,0);
		\coordinate (v2) at (3,1);
		\coordinate (v3) at (-3,1);
		\coordinate (v4) at (6,3);
		\coordinate (v5) at (6,1.5);
		\coordinate (v6) at (6,-1);
		\coordinate (v7) at (-6,2);
		\coordinate (v8) at (-6,1);	
		\coordinate (v9) at (-6,-1);			
		\draw[line width=2] (v1)--(v2);
		\draw[line width=2] (v1)--(v3);
		\draw[dotted] (v2)--(v4);
		\draw[dotted] (v2)--(v5);
		\draw[dotted] (v2)--(v6);
		\draw[dotted] (v3)--(v7);
		\draw[dotted] (v3)--(v8);
		\draw[dotted] (v3)--(v9);
		
		\fill[fill=white,draw =black] (v1) circle (0.3);
		\fill[fill=white,draw =black] (v2) circle (0.3);
		\fill[fill=white,draw =black] (v3) circle (0.3);
		\end{tikzpicture}
		\label{fig.contr3A}}		\qquad
	\subfloat[ ]{
\begin{tikzpicture}[scale=0.35]
\coordinate (v1) at (0,1);
\coordinate (v4) at (3,3);
\coordinate (v5) at (3,1.5);
\coordinate (v6) at (3,-1);
\coordinate (v7) at (-3,2);
\coordinate (v8) at (-3,1);	
\coordinate (v9) at (-3,-1);			
\draw[dotted] (v1)--(v4);
\draw[dotted] (v1)--(v5);
\draw[dotted] (v1)--(v6);
\draw[dotted] (v1)--(v7);
\draw[dotted] (v1)--(v8);
\draw[dotted] (v1)--(v9);

\fill[fill=black,draw =black] (v1) circle (0.3);
\end{tikzpicture}		
		\label{fig.contr3B}}
	\caption{Contraction of a simple path of length two}
	\label{fig.contr3}
\end{figure}

\begin{example}\label{e. contr 3->1}
Let $G=(V,E)$ be the graph with vertex set $V=\{x_1, \ldots ,x_{10} \}$ and edges
\[\begin{array}{rl}
    E= &  \{ \{x_1,x_2 \},\{x_2,x_3 \}, \{x_1,x_3 \}, \{x_4,x_5 \},\{x_5,x_6 \}, \{x_4,x_6 \}   \}\cup\\
     &  \{ \{x_1,x_7 \},\{x_7,x_8 \}, \{x_8,x_4 \}, \{x_1,x_9 \},\{x_9,x_{10} \}, \{x_{10},x_4 \}\}.
\end{array}
 \]

Set $q=(x_1, x_9, x_{10}, x_4 )$ and $p=(x_1, x_9, x_{10})$, then $p$ is a simple even path contained in a path of legth $|p|+1$, but one can check that the Betti numbers strictly decrease after the contraction. Indeed, $\K[G]$ is not Cohen-Macaulay but $\K[G/p]$ is Cohen-Macaulay.
\[
\begin{array}{l|cccc}
i & 0 & 1 & 2 & 3   \\
\hline
    \beta_i (\K[G]) & 1 & 4 & 4 & 1  \\[7pt]
\hline
    \beta_i (\K[G/p]) & 1 & 2 & 1 & 
\end{array}
\]

		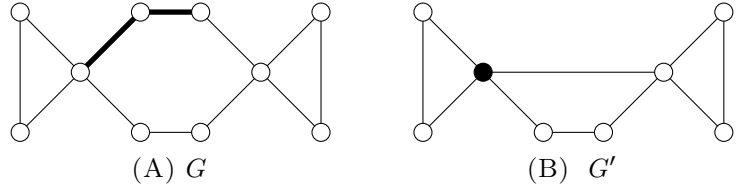
\begin{figure}[ht]
	\centering
	\subfloat[$G$]{
		\begin{tikzpicture}[scale=0.4]
		\coordinate (v1) at (0,0);
		\coordinate (v2) at (-2,-2);
		\coordinate (v3) at (-2,2);
		\coordinate (v7) at (2,-2);
		\coordinate (v9) at (2,2);	
		\coordinate (v8) at (4,-2);
		\coordinate (v10) at (4,2);
		\coordinate (v4) at (6,0);
		\coordinate (v5) at (8,-2);	
		\coordinate (v6) at (8,2);
		\draw (v1)--(v2);
		\draw (v2)--(v3);
		\draw (v3)--(v1);
		\draw (v1)--(v7);
		\draw (v7)--(v8);
		\draw (v8)--(v4);
		\draw (v4)--(v10);	
		\draw[line width=2] (v10)--(v9);
		\draw[line width=2] (v9)--(v1);	
		\draw (v4)--(v5);
		\draw (v5)--(v6);	
		\draw (v6)--(v4);
		\fill[fill=white,draw =black] (v1) circle (0.3);
		\fill[fill=white,draw =black] (v2) circle (0.3);
		\fill[fill=white,draw =black] (v3) circle (0.3);	\fill[fill=white,draw =black] (v4) circle (0.3);
		\fill[fill=white,draw =black] (v5) circle (0.3);
		\fill[fill=white,draw =black] (v6) circle (0.3);
		\fill[fill=white,draw =black] (v7) circle (0.3);
		\fill[fill=white,draw =black] (v8) circle (0.3);
		\fill[fill=white,draw =black] (v9) circle (0.3);
		\fill[fill=white,draw =black] (v10) circle (0.3);
		\end{tikzpicture}
		\label{fig.CMcontr1A}}		\qquad
	\subfloat[ $G'$]{
		\begin{tikzpicture}[scale=0.4]
		\coordinate (v1) at (0,0);
		\coordinate (v2) at (-2,-2);
		\coordinate (v3) at (-2,2);
		\coordinate (v7) at (2,-2);
		\coordinate (v8) at (4,-2);
		\coordinate (v10) at (3,2);
		\coordinate (v4) at (6,0);
		\coordinate (v5) at (8,-2);	
		\coordinate (v6) at (8,2);
		\draw (v1)--(v2);
		\draw (v2)--(v3);
		\draw (v3)--(v1);
		\draw (v1)--(v7);
		\draw (v7)--(v8);
		\draw (v8)--(v4);
		\draw (v4)--(v1);	
		\draw (v4)--(v5);
		\draw (v5)--(v6);	
		\draw (v6)--(v4);
		\fill[fill=black,draw =black] (v1) circle (0.3);
		\fill[fill=white,draw =black] (v2) circle (0.3);
		\fill[fill=white,draw =black] (v3) circle (0.3);	\fill[fill=white,draw =black] (v4) circle (0.3);
		\fill[fill=white,draw =black] (v5) circle (0.3);
		\fill[fill=white,draw =black] (v6) circle (0.3);
		\fill[fill=white,draw =black] (v7) circle (0.3);
		\fill[fill=white,draw =black] (v8) circle (0.3);
		\end{tikzpicture}
		
		\label{fig.CMcontr1B}}
	\caption{The graphs in Example \ref{e. contr 3->1}.}
	\label{fig.CMcontr1}
\end{figure}

\end{example}

With a slight modification of the graph in Example \ref{e. contr 3->1} we show that contracting a maximal path may the total Betti numbers go down.
\begin{example}\label{e. contr 2->0}
Let $G'=(V',E')$ be the graph with vertex set $V=\{x_1, \ldots ,x_{9} \}$ and edges
\[\begin{array}{rl}
    E= &  \{ \{x_1,x_2 \},\{x_2,x_3 \}, \{x_1,x_3 \}, \{x_4,x_5 \},\{x_5,x_6 \}, \{x_4,x_6 \}   \}\cup\\
     &  \{ \{x_1,x_7 \},\{x_7,x_8 \}, \{x_8,x_4 \}, \{x_1,x_9 \},\{x_9,x_{4} \}.
\end{array}
 \]
Note that $G'$ is isomorphic to the graph obtained from the graph $G$ in Example \ref{e. contr 3->1} by contracting the edge $\{x_9,x_{10}\}$, $G'\cong G/\{x_9,x_{10}\}$.

Set $p'=(x_1, x_9, x_4 )$, thus $p'$ is a maximal simple even path. Again one can check that the Betti numbers strictly decrease after the contraction. Indeed $\K[G']$ is not Cohen-Macaulay but $\K[G'/p']$ has this property. 
\[
\begin{array}{l|cccc}
i & 0 & 1 & 2 & 3   \\
\hline
    \beta_i (\K[G']) & 1 & 4 & 4 & 1  \\[7pt]
\hline
    \beta_i (\K[G'/p']) & 1 & 3 & 2 & 
\end{array}
\]
		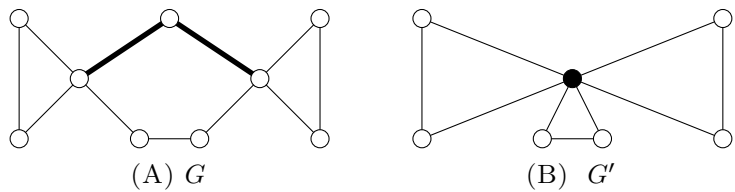
\begin{figure}[ht]
	\centering
	\subfloat[$G$]{
		\begin{tikzpicture}[scale=0.4]
		\coordinate (v1) at (0,0);
		\coordinate (v2) at (-2,-2);
		\coordinate (v3) at (-2,2);
		\coordinate (v7) at (2,-2);
		\coordinate (v9) at (3,2);	
		\coordinate (v8) at (4,-2);
		\coordinate (v10) at (4,2);
		\coordinate (v4) at (6,0);
		\coordinate (v5) at (8,-2);	
		\coordinate (v6) at (8,2);
		\draw (v1)--(v2);
		\draw (v2)--(v3);
		\draw (v3)--(v1);
		\draw (v1)--(v7);
		\draw (v7)--(v8);
		\draw (v8)--(v4);
		\draw[line width=2] (v4)--(v9);
		\draw[line width=2] (v9)--(v1);	
		\draw (v4)--(v5);
		\draw (v5)--(v6);	
		\draw (v6)--(v4);
		\fill[fill=white,draw =black] (v1) circle (0.3);
		\fill[fill=white,draw =black] (v2) circle (0.3);
		\fill[fill=white,draw =black] (v3) circle (0.3);	\fill[fill=white,draw =black] (v4) circle (0.3);
		\fill[fill=white,draw =black] (v5) circle (0.3);
		\fill[fill=white,draw =black] (v6) circle (0.3);
		\fill[fill=white,draw =black] (v7) circle (0.3);
		\fill[fill=white,draw =black] (v8) circle (0.3);
		\fill[fill=white,draw =black] (v9) circle (0.3);
		\end{tikzpicture}
		\label{fig.CM2contr1A}}		\qquad
	\subfloat[ $G'$]{
		\begin{tikzpicture}[scale=0.4]
		\coordinate (v1) at (3,0);
		\coordinate (v2) at (-2,-2);
		\coordinate (v3) at (-2,2);
		\coordinate (v7) at (2,-2);
		\coordinate (v8) at (4,-2);
		\coordinate (v5) at (8,-2);	
		\coordinate (v6) at (8,2);
		\draw (v1)--(v2);
		\draw (v2)--(v3);
		\draw (v3)--(v1);
		\draw (v1)--(v7);
		\draw (v7)--(v8);
		\draw (v8)--(v1);
		\draw (v1)--(v5);
		\draw (v5)--(v6);	
		\draw (v6)--(v1);
		\fill[fill=black,draw =black] (v1) circle (0.3);
		\fill[fill=white,draw =black] (v2) circle (0.3);
		\fill[fill=white,draw =black] (v3) circle (0.3);	
		\fill[fill=white,draw =black] (v5) circle (0.3);
		\fill[fill=white,draw =black] (v6) circle (0.3);
		\fill[fill=white,draw =black] (v7) circle (0.3);
		\fill[fill=white,draw =black] (v8) circle (0.3);
		\end{tikzpicture}
		
		\label{fig.CM2contr1B}}
	\caption{The graphs in Example \ref{e. contr 2->0}.}
	\label{fig.CM2contr1}
\end{figure}

\end{example}

Theorem \ref{t. main 0} does not hold if we consider an even walk instead of even path. In fact, a contraction of a walk in a graph could produce new primitive odd cycles  which make the total Betti numbers increase. This phenomenon is illustrated in the next example.
\begin{example}\label{ex.betti increase}
Let $G=(V,E)$ be the graph with vertex set $V=\{x_1,\ldots, x_{11}\}$ and edge set
\[
\begin{array}{rl}
     E=& \{\{ x_1,x_2\},\{ x_2,x_3\},\{ x_3,x_4\},\{ x_4,x_5\},\{ x_5,x_6\},\{ x_1,x_6\},\{ x_2,x_5\}\}\cup \\
     & \{\{ x_1,x_7\},\{ x_{1},x_{8}\},\{ x_7,x_9\},\{ x_8,x_{9}\},\{ x_{9},x_{10}\},\{ x_9,x_{11}\},\{ x_{10},x_{11}\}\}
\end{array}
\]
The contraction of the walk $p=(x_1,x_2,x_3)$ gives the graph $G'=G/p$
whose edge set is
\[
\begin{array}{rl}
     E'=& \{\{ x_1,x_4\},\{ x_4,x_5\},\{ x_5,x_6\},\{ x_1,x_6\},\{ x_2,x_5\}\}\cup \\
     & \{\{ x_1,x_7\},\{ x_{1},x_{8}\},\{ x_7,x_9\},\{ x_8,x_{9}\},\{ x_{9},x_{10}\},\{ x_9,x_{11}\},\{ x_{10},x_{11}\}\}
\end{array}
\]
These graphs are pictured in Figure \eqref{fig.walkcontr1}. 
A computation shows that the total Betti numbers of $G'$ are bigger than those of $G$, precisely we get  
$$\begin{array}{r|ccccc}
    i: &  0 &  1 &  2 &  3 &  4\\
\hline
\beta_i(\K[G]) & 1   & 3 & 3 & 1 &  \\[10pt]
\hline
\beta_i(\K[G']) & 1   & 8 & 15 & 10 & 2 \\[10pt]
\end{array}.
$$
		\begin{figure}[ht]
	\centering
	\subfloat[The graph  $G$ in Example \ref{ex.betti increase}]{
		\begin{tikzpicture}[scale=0.4]
		\coordinate (v1) at (0,0);
		\coordinate (v2) at (4,0);
		\coordinate (v3) at (8,0);
		\coordinate (v4) at (8,4);
		\coordinate (v5) at (4,4);	
		\coordinate (v6) at (0,4);
		\coordinate (v7) at (12,0);
		\coordinate (v8) at (12,4);
		\coordinate (v9) at (16,0);
		\coordinate (v10) at (20,0);	
		\coordinate (v11) at (20,4);
		\draw[line width=2] (v1)--(v2);
		\draw[line width=2] (v2)--(v3);
		\draw (v3)--(v4);
		\draw (v3)--(v4);
		\draw (v4)--(v5);
		\draw (v5)--(v6);
		\draw (v6)--(v1);
		\draw (v2)--(v5);	
		\draw (v3)--(v7);
		\draw (v3)--(v8);	
		\draw (v7)--(v9);
		\draw (v8)--(v9);	
		\draw (v9)--(v10);
		\draw (v9)--(v11);
		\draw (v11)--(v10);
		\fill[fill=white,draw =black] (v1) circle (0.3);
		\fill[fill=white,draw =black] (v2) circle (0.3);
		\fill[fill=white,draw =black] (v3) circle (0.3);	\fill[fill=white,draw =black] (v4) circle (0.3);
		\fill[fill=white,draw =black] (v5) circle (0.3);
		\fill[fill=white,draw =black] (v6) circle (0.3);
		\fill[fill=white,draw =black] (v7) circle (0.3);
		\fill[fill=white,draw =black] (v8) circle (0.3);
		\fill[fill=white,draw =black] (v9) circle (0.3);
		\fill[fill=white,draw =black] (v10) circle (0.3);
		\fill[fill=white,draw =black] (v11) circle (0.3);
		\end{tikzpicture}
		\label{fig.walkcontr1A}}		\qquad
	\subfloat[The graph  $G'$ in Example \ref{ex.betti increase}]{
		\begin{tikzpicture}[scale=0.4]
		\coordinate (v3) at (8,0);
		\coordinate (v4) at (8,4);
		\coordinate (v5) at (4,4);	
		\coordinate (v6) at (0,4);
		\coordinate (v7) at (12,0);
		\coordinate (v8) at (12,4);
		\coordinate (v9) at (16,0);
		\coordinate (v10) at (20,0);	
		\coordinate (v11) at (20,4);

		\draw (v3)--(v4);
		\draw (v3)--(v4);
		\draw (v4)--(v5);
		\draw (v5)--(v6);
		\draw (v6)--(v3);
		\draw (v3)--(v5);	
		\draw (v3)--(v7);
		\draw (v3)--(v8);	
		\draw (v7)--(v9);
		\draw (v8)--(v9);	
		\draw (v9)--(v10);
		\draw (v9)--(v11);
		\draw (v11)--(v10);		
		\fill[fill=black,draw =black] (v3) circle (0.3);	\fill[fill=white,draw =black] (v4) circle (0.3);
		\fill[fill=white,draw =black] (v5) circle (0.3);
		\fill[fill=white,draw =black] (v6) circle (0.3);
		\fill[fill=white,draw =black] (v7) circle (0.3);
		\fill[fill=white,draw =black] (v8) circle (0.3);
		\fill[fill=white,draw =black] (v9) circle (0.3);
		\fill[fill=white,draw =black] (v10) circle (0.3);
		\end{tikzpicture}
		
		\label{fig.walkcontr1B}}
	\caption{$G'$ is a walk contraction of $G$.}
	\label{fig.walkcontr1}
\end{figure}
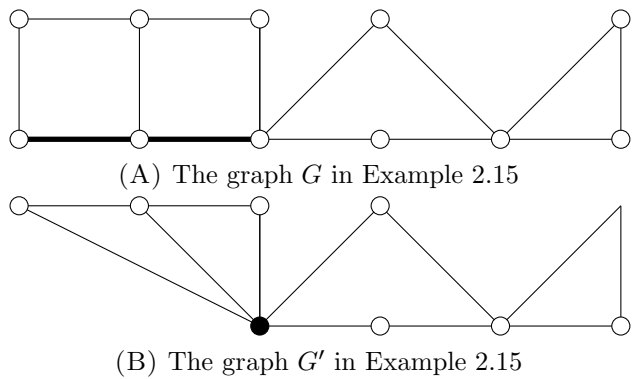
\end{example}


\section{Proof of Theorem \ref{t. main 1}}\label{s. proof of main1}

In this section we prove Theorem \ref{t. main 1} by using combinatorial tools. 
We begin by introducing some terminology and results about simplicial complexes. For what is not included in this section we refer to  \cite{HH-monomial}, see in particular \cite [Section 1.5.1]{HH-monomial} for the background and \cite[Section 5.1.4]{HH-monomial} for the results on simplicial homology. 
Then, we  prove some preliminary results in order to apply the formula in {\cite[Theorem 2.1]{BCMP}} which allows us to compute the total Betti numbers of the toric ideal of a graph.   

\subsection{Simplicial Complexes}\hfill\\
A {\it simplicial complex} $\Delta$ on $[n]$ is a finite collection of  subsets of $[n]$ with the property that $A\in \Delta$ implies $B\in \Delta$ for any $B\subseteq A$. We will assume $n$ large enough and we  do not require that $\{i\}\in \Delta$ for all $i\in [n]$.

Let $\Delta$ be a simplicial complex on $[n]$ and let $A\subseteq [n]$ be a non empty set, we call the {\it cone} of $\Delta$ over $A$, and we denote by $A*\Delta$, the simplicial complex whose facets are
$A\cup F$, where $F$ is a facet of $\Delta.$ It is well known that cones are acyclic, i.e., their reduced simplicial homology groups are trivial in all dimensions, see for instance \cite{stanley}.

We need the following technical Lemma. It is an immediate consequence of the well known reduced Mayer–Vietoris exact sequence, see \cite[Proposition 5.1.8]{HH-monomial}. We include a short proof for the convenience of the reader.

\begin{lemma}\label{l.m-v} Let $\Delta_1 , \Delta_2$ be two acyclic simplicial complexes on $[n]$. Set $\Delta= \Delta_1 \cup \Delta_2$.  Then
	$$\tilde H_i(\Delta;\K)\cong\tilde H_{i-1}(\Delta_1\cap \Delta_2;\K)\ \ \ \ \ \forall i>0.$$
\end{lemma}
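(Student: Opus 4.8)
The plan is to deduce the statement directly from the reduced Mayer--Vietoris sequence associated to the decomposition $\Delta = \Delta_1 \cup \Delta_2$, exploiting that both pieces are acyclic. Recall that for any two subcomplexes $\Delta_1, \Delta_2$ of a simplicial complex on $[n]$ with union $\Delta$ and intersection $\Delta_1 \cap \Delta_2$, there is a long exact sequence in reduced simplicial homology with coefficients in $\K$,
\[
\cdots \to \tilde H_i(\Delta_1;\K)\oplus \tilde H_i(\Delta_2;\K)\to \tilde H_i(\Delta;\K)\to \tilde H_{i-1}(\Delta_1\cap\Delta_2;\K)\to \tilde H_{i-1}(\Delta_1;\K)\oplus\tilde H_{i-1}(\Delta_2;\K)\to\cdots
\]
This is exactly the content cited from \cite[Proposition 5.1.8]{HH-monomial}. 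First I would record this sequence and note that it is valid because the simplicial chain complexes of $\Delta_1$ and $\Delta_2$ sit inside that of $\Delta$ with the expected sum and intersection behavior at the level of chains, giving a short exact sequence of chain complexes whose associated long exact sequence is the Mayer--Vietoris sequence.

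The key step is then to insert the acyclicity hypothesis. By assumption $\tilde H_j(\Delta_1;\K)=\tilde H_j(\Delta_2;\K)=0$ for all $j$, so every term of the form $\tilde H_j(\Delta_1;\K)\oplus\tilde H_j(\Delta_2;\K)$ in the sequence vanishes. Fixing $i>0$, the relevant portion of the sequence reads
\[
0 \to \tilde H_i(\Delta;\K)\to \tilde H_{i-1}(\Delta_1\cap\Delta_2;\K)\to 0,
\]
where the left zero is $\tilde H_i(\Delta_1;\K)\oplus\tilde H_i(\Delta_2;\K)$ and the right zero is $\tilde H_{i-1}(\Delta_1;\K)\oplus\tilde H_{i-1}(\Delta_2;\K)$. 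Exactness of this three-term sequence forces the middle map to be an isomorphism, which is precisely the claimed $\tilde H_i(\Delta;\K)\cong \tilde H_{i-1}(\Delta_1\cap\Delta_2;\K)$.

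I do not expect any serious obstacle here; the statement is essentially a formal consequence of Mayer--Vietoris plus acyclicity. The one point requiring minor care is the bookkeeping near the connecting map and the role of the restriction $i>0$: the reduced Mayer--Vietoris sequence remains exact at the augmented (degree $-1$) level, but restricting to $i>0$ avoids any delicate discussion of the augmentation and of whether the empty set is treated as a face, so I would simply note that for $i>0$ both flanking direct-sum terms lie strictly inside the acyclic range and vanish. The whole argument is short enough that the display above together with the two-line exactness conclusion constitutes the complete proof.
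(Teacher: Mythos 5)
Your argument is correct and is essentially identical to the paper's proof: both invoke the reduced Mayer--Vietoris long exact sequence from \cite[Proposition 5.1.8]{HH-monomial} and observe that acyclicity of $\Delta_1$ and $\Delta_2$ kills the flanking direct-sum terms, forcing the connecting map to be an isomorphism for $i>0$. No gaps.
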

\begin{proof}
	The statement follows from the long exact sequence,
	$$  \to \tilde H_i(\Delta_1;\K)\oplus \tilde H_i(\Delta_2;\K) \to \tilde H_i(\Delta;\K)\to \tilde H_{i-1}(\Delta_1\cap \Delta_2;\K)\to \tilde H_{i-1}(\Delta_1;\K)\oplus \tilde H_{i-1}(\Delta_2;\K) \to  $$
	since, by hypothesis, $\tilde H_i(\Delta_1;\K)= \tilde H_i(\Delta_2;\K)=H_{i-1}(\Delta_1;\K)= \tilde H_{i-1}(\Delta_2;\K)=0.$ 
\end{proof}

The following results are corollaries of Lemma \ref{l.m-v}.

\begin{corollary}\label{c. m-v case 1}
	Let $\Delta_1, \Delta_2$ be simplicial complex on $[n]$. Let $A,B,A',B'\subseteq [n]$ be non empty sets such that 
 $$A\cap B=A'\cap B'=(A\cup B)\cap (\Delta_1 \cup \Delta_2)=(A'\cup B')\cap (\Delta_1 \cup \Delta_2)=\emptyset.$$
Then 
   $$\tilde H_i\big((A*\Delta_1) \cup (B*\Delta_2);\K\big)\cong \tilde H_i\big((A'*\Delta_1) \cup (B'*\Delta_2);\K\big)\ \ \ \ \ \forall i>0.$$
\end{corollary}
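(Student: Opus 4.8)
The plan is to reduce both sides of the claimed isomorphism to the reduced homology of the single complex $\Delta_1\cap\Delta_2$ by means of Lemma \ref{l.m-v}, after which the two expressions are visibly isomorphic to each other. First I would note that each of the four complexes $A*\Delta_1$, $B*\Delta_2$, $A'*\Delta_1$, $B'*\Delta_2$ is a cone over a non-empty vertex set, and hence acyclic by the fact recalled above (cones have trivial reduced homology). Applying Lemma \ref{l.m-v} with the two acyclic complexes $A*\Delta_1$ and $B*\Delta_2$ gives, for all $i>0$,
$$\tilde H_i\big((A*\Delta_1)\cup(B*\Delta_2);\K\big)\cong\tilde H_{i-1}\big((A*\Delta_1)\cap(B*\Delta_2);\K\big),$$
and the same statement holds verbatim for the primed sets $A',B'$.

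The crux of the argument is the purely combinatorial identity
$$(A*\Delta_1)\cap(B*\Delta_2)=\Delta_1\cap\Delta_2,$$
which holds independently of the particular choice of apex sets. To establish it I would use the explicit description of the faces of a cone: a set $G$ is a face of $A*\Delta_1$ precisely when it can be written $G=G_A\cup G_1$ with $G_A\subseteq A$ and $G_1\in\Delta_1$, and analogously for $B*\Delta_2$. The hypotheses $A\cap B=\emptyset$ and $(A\cup B)\cap(\Delta_1\cup\Delta_2)=\emptyset$ say exactly that the vertices of $A$ are disjoint from $B$ and from every vertex occurring in a face of $\Delta_1$ or $\Delta_2$, and symmetrically for $B$.

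Now if $G$ lies in both cones, writing $G=G_A\cup G_1=G_B\cup G_2$ and intersecting with $A$ forces $G_A=G\cap A=\emptyset$, because $G_1$, $G_B$, and $G_2$ all avoid $A$ by the disjointness hypotheses; symmetrically $G_B=\emptyset$. Hence $G=G_1=G_2\in\Delta_1\cap\Delta_2$, giving one inclusion, while the reverse inclusion is immediate since $\Delta_i$ is a subcomplex of $A*\Delta_i$. The identical computation yields $(A'*\Delta_1)\cap(B'*\Delta_2)=\Delta_1\cap\Delta_2$. Feeding these two identities into the two instances of Lemma \ref{l.m-v} shows that both sides of the desired isomorphism are isomorphic to $\tilde H_{i-1}(\Delta_1\cap\Delta_2;\K)$ for every $i>0$, hence to each other.

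The main obstacle is the set-theoretic bookkeeping in the intersection step: one must use the disjointness of $A\cup B$ (respectively $A'\cup B'$) from the vertices of $\Delta_1\cup\Delta_2$ carefully to collapse the apex contributions $G_A$ and $G_B$ to the empty set, so that the intersection of the two cones does not depend on the apex sets at all. Once this identity is in hand, everything else is a direct application of Lemma \ref{l.m-v} together with the acyclicity of cones.
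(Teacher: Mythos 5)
Your proposal is correct and follows essentially the same route as the paper: the paper's proof likewise reduces both unions via Lemma \ref{l.m-v} and the acyclicity of cones to the observation that $(A*\Delta_1)\cap(B*\Delta_2)=(A'*\Delta_1)\cap(B'*\Delta_2)=\Delta_1\cap\Delta_2$. The only difference is that you spell out the set-theoretic verification of this intersection identity, which the paper leaves implicit.
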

\begin{proof}
	Since we have $$(A*\Delta_1) \cap (B*\Delta_2)=(A'*\Delta_1) \cap (B'*\Delta_2)=\Delta_1 \cap \Delta_2,$$
	it follows from Lemma \ref{l.m-v}
and from the well known fact that cones are acyclic.
\end{proof}

In the next corollary we assume $B=B'$ and we make the other assumptions weaker.

\begin{corollary}\label{c. m-v case 2}
	Let $\Delta_1, \Delta_2$ be simplicial complexes on $[n].$ Let $A,A',B$  be non empty subsets of $[n]$ such that 
	$$A\cap B=A'\cap B= A\cap (\Delta_1 \cup \Delta_2)=A'\cap (\Delta_1 \cup \Delta_2)=\emptyset.$$

	Then $$\tilde H_i\left((A*\Delta_1) \cup (A*\Delta_2);\K\right)\cong \tilde H_i\left((A'*\Delta_1) \cup (B*\Delta_2);\K\right)\ \ \ \ \ \forall i>0.$$
\end{corollary}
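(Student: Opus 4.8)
The plan is to show that \emph{both} sides have vanishing reduced homology in positive degrees, so that the asserted isomorphism reduces to $0\cong 0$. The left-hand complex is immediately seen to be a cone: since both cones share the apex set $A$, one has $(A*\Delta_1)\cup(A*\Delta_2)=A*(\Delta_1\cup\Delta_2)$, and a cone is acyclic. Hence $\tilde H_i\big((A*\Delta_1)\cup(A*\Delta_2);\K\big)=0$ for all $i>0$, and the whole content of the statement is the vanishing of $\tilde H_i$ of the right-hand complex for $i>0$.

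For the right-hand side I would invoke the Mayer--Vietoris mechanism of Lemma \ref{l.m-v}. Because $A'$ and $B$ are non-empty, both $A'*\Delta_1$ and $B*\Delta_2$ are cones, hence acyclic, so Lemma \ref{l.m-v} yields
$$\tilde H_i\big((A'*\Delta_1)\cup(B*\Delta_2);\K\big)\cong\tilde H_{i-1}\big((A'*\Delta_1)\cap(B*\Delta_2);\K\big)\qquad\forall i>0.$$
The problem is thereby transferred to understanding the intersection. Here I would exploit the hypotheses $A'\cap B=\emptyset$ and $A'\cap(\Delta_1\cup\Delta_2)=\emptyset$ exactly as in the proof of Corollary \ref{c. m-v case 1}: any face lying in both $A'*\Delta_1$ and $B*\Delta_2$ is contained in $(A'\cup\operatorname{supp}\Delta_1)\cap(B\cup\operatorname{supp}\Delta_2)$, and the part of such a face meeting $A'$ would have to lie simultaneously in $A'$ and in $B\cup\operatorname{supp}\Delta_2$, which is impossible. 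Thus every such face avoids $A'$ entirely and lies in $\Delta_1$, which identifies $(A'*\Delta_1)\cap(B*\Delta_2)=\Delta_1\cap(B*\Delta_2)$.

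It remains to prove that $\Delta_1\cap(B*\Delta_2)$ is acyclic, and this is the step I expect to be the main obstacle, since---unlike the clean intersection $A*(\Delta_1\cap\Delta_2)$ arising on the left---it is not a cone for formal reasons alone. My intended route is to produce an explicit cone apex: I would look for a vertex $v$, coming from the interaction of $B$ with the complexes in the situation at hand, such that $S\cup\{v\}$ lies in $\Delta_1\cap(B*\Delta_2)$ whenever $S$ does. Concretely, a vertex $v\in B$ that is a cone point of $\Delta_1$ does the job, because then $S\cup\{v\}\in\Delta_1$ by the cone property of $\Delta_1$ while $S\cup\{v\}\in B*\Delta_2$ holds automatically since $v\in B$. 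Establishing that the hypotheses force the existence of such a $v$ (equivalently, that $\Delta_1\cap(B*\Delta_2)$ is collapsible) is the crux; once it is in hand, acyclicity of the intersection gives $\tilde H_{i-1}(\,\cdot\,)=0$ for all $i>0$, and the displayed isomorphism follows by comparison with the acyclic left-hand side.
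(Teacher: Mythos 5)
There is a genuine problem with your plan, and it stems from a typo in the statement that you took at face value. As you correctly observe, $(A*\Delta_1)\cup(A*\Delta_2)=A*(\Delta_1\cup\Delta_2)$ is a cone, so with that reading the corollary would assert that $(A'*\Delta_1)\cup(B*\Delta_2)$ is acyclic in positive degrees --- and that is false. Take $\Delta_1=\Delta_2=\{\emptyset,\{1\},\{2\}\}$, $A=\{5\}$, $A'=\{3\}$, $B=\{4\}$: all the disjointness hypotheses hold, but $(A'*\Delta_1)\cup(B*\Delta_2)$ is the $4$-cycle on $1,3,2,4$, with $\tilde H_1\cong\K$. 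Consistently, the complex $\Delta_1\cap(B*\Delta_2)$ whose acyclicity you identify as ``the crux'' is here two disjoint points, so no cone apex $v$ can exist and that step cannot be carried out. The intended statement (clear from the sentence ``we assume $B=B'$'' pointing back to Corollary \ref{c. m-v case 1}, and from the paper's own proof) has $(A*\Delta_1)\cup(B*\Delta_2)$ on the left; in the example above this is the $4$-cycle on $1,5,2,4$, and the two sides are indeed isomorphic without being zero.

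The paper's argument is a direct comparison, not a vanishing argument: apply Lemma \ref{l.m-v} to both unions (all of $A*\Delta_1$, $A'*\Delta_1$, $B*\Delta_2$ are cones, hence acyclic), which gives $\tilde H_i$ of each union $\cong\tilde H_{i-1}$ of the corresponding intersection; then observe that the two intersections are literally the same complex, because a common face of $A*\Delta_1$ and $B*\Delta_2$ lies in $(A\cup\supp\Delta_1)\cap(B\cup\supp\Delta_2)$ and the hypotheses $A\cap B=A\cap(\Delta_1\cup\Delta_2)=\emptyset$ force it to avoid $A$ (and likewise for $A'$). Hence $(A*\Delta_1)\cap(B*\Delta_2)=(A'*\Delta_1)\cap(B*\Delta_2)$, and the two $\tilde H_i$'s are isomorphic to $\tilde H_{i-1}$ of one and the same complex, which need not be acyclic. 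Your intersection analysis in the third paragraph is essentially this computation; the misstep is only in trying to push on to acyclicity rather than stopping at the equality of the two intersections.
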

\begin{proof}
	It follows from Lemma \ref{l.m-v} since $$(A*\Delta_1) \cap (B*\Delta_2)=(A'*\Delta_1) \cap (B*\Delta_2)=(\Delta_1 \cap \Delta_2) \cup (B \cap \Delta_1).$$
\end{proof}

\begin{proposition}\label{p. m-v case 3}
	Let, $\Delta_1,\Delta_2, \Delta_3$ be simplicial complexes on $[n]$.  For any $A,B$ non empty subsets of $[n]$, consider the following simplicial complex
	$$\Delta_{A,B} = A*\Delta_1 \cup B*\Delta_2\cup (A\cup B)*\Delta_3.$$
	If $A,B,A',B'\subseteq [n]$ are non empty and  such that
 	$$ 	A\cap B=(A\cup B)\cap (\Delta_1 \cup \Delta_2\cup \Delta_3)=A'\cap B'=(A'\cup B')\cap (\Delta_1 \cup \Delta_2\cup \Delta_3)=\emptyset,$$
then	
	 $$\tilde H_i(\Delta_{A,B};\K)\cong \tilde H_i(\Delta_{A',B'};\K)\ \ \ \ \ \forall i>0.$$
\end{proposition}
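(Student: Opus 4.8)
The plan is to peel off the apex sets one at a time by iterated Mayer--Vietoris, reducing the computation to complexes that no longer depend on $A$ or $B$. The crucial first observation is that
\[
Q:=B*\Delta_2\cup(A\cup B)*\Delta_3
\]
is itself a cone with apex set $B$: every facet of either piece contains $B$, so every facet of the union contains $B$, whence $Q$ is a cone over $B$ and therefore acyclic. Since $P:=A*\Delta_1$ is also an acyclic cone and $\Delta_{A,B}=P\cup Q$, Lemma \ref{l.m-v} applies to the two acyclic pieces $P,Q$ and gives
\[
\tilde H_i(\Delta_{A,B};\K)\cong \tilde H_{i-1}(P\cap Q;\K)\qquad\forall i>0.
\]

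Next I would compute $P\cap Q$ explicitly. The hypotheses $A\cap B=\emptyset$ and $(A\cup B)\cap(\Delta_1\cup\Delta_2\cup\Delta_3)=\emptyset$ force every common face to split cleanly into its $A$-, $B$- and $\Delta_j$-parts, so the same bookkeeping already carried out in Corollaries \ref{c. m-v case 1} and \ref{c. m-v case 2} yields, after distributing the intersection over the union defining $Q$,
\[
(A*\Delta_1)\cap(B*\Delta_2)=\Delta_1\cap\Delta_2,\qquad
(A*\Delta_1)\cap\big((A\cup B)*\Delta_3\big)=A*(\Delta_1\cap\Delta_3).
\]
Writing $\Sigma=\Delta_1\cap\Delta_2$ and $\Lambda=\Delta_1\cap\Delta_3$, this gives $P\cap Q=\Sigma\cup(A*\Lambda)$. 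The point is that $\Sigma$ involves neither $A$ nor $B$, while $A*\Lambda$ is again an acyclic cone (here $A\neq\emptyset$, and the $\Delta_j$ contain the empty face).

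It then remains to show that $\tilde H_*(\Sigma\cup(A*\Lambda);\K)$ is independent of the admissible set $A$. Since $\Sigma\cap(A*\Lambda)=\Sigma\cap\Lambda=\Delta_1\cap\Delta_2\cap\Delta_3$ does not depend on $A$, I would feed the pair $\Sigma,\,A*\Lambda$ into the reduced Mayer--Vietoris long exact sequence; because $A*\Lambda$ is acyclic, it collapses to
\[
\cdots\to \tilde H_i(\Sigma\cap\Lambda;\K)\xrightarrow{\ \iota_*\ }\tilde H_i(\Sigma;\K)\to \tilde H_i\big(\Sigma\cup(A*\Lambda);\K\big)\to \tilde H_{i-1}(\Sigma\cap\Lambda;\K)\xrightarrow{\ \iota_*\ }\cdots,
\]
where $\iota\colon\Sigma\cap\Lambda\hookrightarrow\Sigma$ is independent of $A$. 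Working over the field $\K$, exactness gives
\[
\dim_\K\tilde H_i\big(\Sigma\cup(A*\Lambda);\K\big)=\dim_\K\operatorname{coker}\big(\tilde H_i(\Sigma\cap\Lambda)\xrightarrow{\iota_*}\tilde H_i(\Sigma)\big)+\dim_\K\ker\big(\tilde H_{i-1}(\Sigma\cap\Lambda)\xrightarrow{\iota_*}\tilde H_{i-1}(\Sigma)\big),
\]
a quantity visibly independent of $A$; over a field, equal dimensions means isomorphic. Applying this to both $A$ and $A'$ and combining with the displayed Mayer--Vietoris isomorphisms for $\Delta_{A,B}$ and $\Delta_{A',B'}$ yields $\tilde H_i(\Delta_{A,B};\K)\cong\tilde H_i(\Delta_{A',B'};\K)$ for all $i>0$.

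The face bookkeeping of the second paragraph is routine. The conceptual heart of the argument is the first paragraph's observation that $B*\Delta_2\cup(A\cup B)*\Delta_3$ is a single cone over $B$, since this is exactly what allows the clean application of the two-acyclic-pieces Lemma \ref{l.m-v} rather than a messier threefold Mayer--Vietoris. The secondary subtlety, and the place I expect to have to be careful, is the final stage: $\Sigma$ is not acyclic, so Lemma \ref{l.m-v} no longer applies and I must invoke the full long exact sequence together with the splitting of vector-space extensions over $\K$ to extract the $A$-independence.
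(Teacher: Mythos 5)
Your proof is correct, and its first half coincides with the paper's: you use the same regrouping $\Delta_{A,B}=(A*\Delta_1)\cup B*(\Delta_2\cup A*\Delta_3)$ into two acyclic cones, apply Lemma \ref{l.m-v}, and compute $P\cap Q=(\Delta_1\cap\Delta_2)\cup A*(\Delta_1\cap\Delta_3)$, exactly as in equation \eqref{eq.H1}. Where you diverge is the final step. The paper never analyses the homology of $(\Delta_1\cap\Delta_2)\cup A*(\Delta_1\cap\Delta_3)$ directly: it instead observes that \eqref{eq.H1} shows $\tilde H_i(\Delta_{A,B})$ is independent of $B$, that the symmetric regrouping $\Delta_{A,B}=A*(\Delta_1\cup B*\Delta_3)\cup B*\Delta_2$ gives an expression independent of $A$, and then chains $\Delta_{A,B}\rightsquigarrow\Delta_{A,U}\rightsquigarrow\Delta_{A',U}\rightsquigarrow\Delta_{A',B'}$ through an auxiliary set $U$ disjoint from everything. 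You instead prove the $A$-independence of $\tilde H_*\bigl((\Delta_1\cap\Delta_2)\cup A*(\Delta_1\cap\Delta_3)\bigr)$ head-on, via a second Mayer--Vietoris sequence in which one piece ($\Delta_1\cap\Delta_2$) is not acyclic, extracting $\dim_\K\tilde H_i$ as $\dim\operatorname{coker}(\iota_*)+\dim\ker(\iota_*)$ for the $A$-independent inclusion $\iota\colon\Delta_1\cap\Delta_2\cap\Delta_3\hookrightarrow\Delta_1\cap\Delta_2$. Both routes are sound. The paper's trick stays entirely within the two-acyclic-pieces Lemma \ref{l.m-v} and needs no rank bookkeeping, at the cost of invoking the second decomposition and requiring $n$ large enough to accommodate the auxiliary $U$; your route needs the full long exact sequence and the field hypothesis (equal finite dimensions imply isomorphism) but dispenses with both the auxiliary set and the second decomposition, and it makes explicit \emph{what} the common homology is, namely an extension of $\ker\iota_*$ by $\operatorname{coker}\iota_*$.
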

\begin{proof}
	Let $A,B\subseteq [n]$ be finite non empty, disjoint and such that $(A\cup B)\cap (\Delta_1 \cup \Delta_2\cup \Delta_3)=\emptyset$. 
	Note that $\Delta_{A,B}$ can be written as follow
	$$\Delta_{A,B} =(A*\Delta_1) \cup B*(\Delta_2\cup A*\Delta_3).$$
	So, by Lemma \ref{l.m-v},  we get
	\begin{equation}\label{eq.H1}
	\tilde H_i(\Delta_{A,B};\K)\cong\tilde H_{i-1}\left((\Delta_1\cap \Delta_2) \cup A* (\Delta_1\cap \Delta_3);\K\right)\ \ \ \ \ \forall i>0.
	\end{equation}
	That means that the reduced homology groups of  $\Delta_{A,B}$ does not explicitly depend on $B$.
	
	But we can also write $\Delta_{A,B}$ as 
	\begin{equation}\label{eq.H2}
	\Delta_{A,B} =A*(\Delta_1\cup B*\Delta_3) \cup B*(\Delta_2)
	\end{equation}
		so, by  Lemma \ref{l.m-v} we have
	$$\tilde H_i(\Delta_{A,B};\K)\cong\tilde H_{i-1}\left((\Delta_1\cap \Delta_2) \cup B* (\Delta_2\cap \Delta_3);\K\right)\ \ \ \ \ \forall i>0.$$
	
	To conclude the proof it is enough to take  $U\subseteq [n]$ non-empty such that $U$ is disjoint from  $A,B,A',B'$ and to note that, for equations \eqref{eq.H1} and  \eqref{eq.H2}, we have   
	$$\tilde H_i(\Delta_{A,B};\K)= \tilde H_i(\Delta_{A,U};\K) =\tilde H_i(\Delta_{A',U};\K) =\tilde H_i(\Delta_{A',B'};\K).$$
\end{proof}

\subsection{Betti numbers of toric ideals via simplicial complexes}\hfill\\
Simplicial complexes are involved in the computation of the Betti numbers of a toric algebra in \cite[Theorem 2.1]{BCMP}. Here we recall the notation and we include a short exposition of such result.

Let $G=(V,E)$ be a graph on the vertex set $V=\{x_1, \ldots, x_r\}$ with edges $E=\{e_1, \ldots, e_n \}$.
Consider the semigroups $\mathbb N[V]=\langle x_1,\ldots, x_r \rangle_{\mathbb N}$ and $\mathbb N[E]=\langle e_1, \ldots, e_n\rangle_{\mathbb N}$ and call ${\mathcal V}=[x_1,\ldots, x_r]$ and  
${\mathcal E}=[e_1,\ldots, e_n]$ the standard basis of $\N[V]$ and $\N[E]$. 

Consider the linear map $$\varphi_G: \mathbb N[E]\to \mathbb N[V]$$ induced by $$\varphi_G(e_j)= x_{j_1}+x_{j_2}\ \ \ \ \ \text{where}\ \ \ e_j=\{x_{j_1}, x_{j_2}\}.$$

The incidence matrix $M_G\in \mathbb N^{r,n}$ of the graph $G$ is the matrix associated to the linear map $\varphi_G$ with respect the basis $[e_1, \ldots, e_n]$ and $[x_1,\ldots, x_r]$. 

We have the following commutative square

\[
\begin{array}{lcl}
\mathbb N[E] & \stackrel{\varphi_{G}}{\longrightarrow} & \mathbb N[V]\\
\downarrow^{\pi_{\mathcal E}}\ & &\downarrow^{\pi_{\mathcal V}}\\
\mathbb N^n & \stackrel{M_{G}}{\longrightarrow} & \mathbb N^r\\
\end{array}
\]

where $\pi_{\mathcal V}$ and $\pi_{\mathcal E}$ are the canonical isomorphisms defined by    $$\pi_{\mathcal V}(a_1x_1+a_2x_2+\cdots +a_rx_r)=(a_1,a_2,\ldots, a_r)\  \   \text{and}  
\ \  \pi_{\mathcal E}(b_1e_1+b_2e_2+\cdots +b_ne_n)=(b_1,b_2,\ldots, b_n).$$

For  $s\in \N[V]$ we set for short $\ol s=\pi_{\mathcal V}(s)$; and for  $F\subseteq [n]$ we denote $e_F=\sum_{i\in F}e_i$; we define the simplicial complex
\[
\Delta_{s}^G =\Delta_{\ol s}^G =  
\{  F\subseteq [n] \; | \; 
s -  \varphi_G(e_F) \in Im(\varphi_G) \} .
\]

It is immediate to note that if $s\notin Im(\varphi_G)$ then $\Delta_{s}^G = \emptyset$.

\begin{remark}\label{r. facets}
	Note that given $s\in Im(\varphi_G)$ and $F$  a facet of $\Delta_{s}^G$, then $$s\in \langle\varphi_G(e_i)|\ i\in F \rangle_{\N}. $$ 
	Indeed, $s -  \varphi_G(e_F) \in Im(\varphi_G)$,  so if $s\notin \langle\varphi_G(e_i)|\ i\in F \rangle_{\N}$, then   there exists a $j\in [n]\setminus F$ such that $s -  \varphi_G(e_{F})- \varphi_G(e_j)=s -  \varphi_G(e_{F\cup\{j\}}) \in Im(\varphi_G)$, contradicting the maximality of $F$.
	
	Thus
	  $$s=\varphi_G(\sum_{i\in F}a_ie_i)\ \text{ for some positive integers}\ a_i.$$ 
\end{remark}

We denote by $ \beta_{i,\ol s}(\K[G])$, 
the $i$-th multigraded Betti number of $\K[G]$ in degree $\ol s$. 
Briales, Campillo, Mariju\'{a}n, and Pis\'{o}n  in {\cite[Theorem 2.1]{BCMP}} proved the following result (we write it in the above terminology).	Let $G$ be a finite simple graph, let $s \in \mathbb N[V]$, then 
\begin{equation}\label{l.betti}
	\beta_{j+1, \ol s} (\K[G]) 
	= \dim_{\K} \tilde{H}_j (\Delta_s^G; \K). 
\end{equation}

What follow are preparatory lemmas for the proof of Theorem \ref{t. main 1}.  

\begin{lemma}\label{l.acyc1}
	Let $G=(V,E)$ be a graph and $(e_1,e_2)=(x_1,x_2,x_3)$ be a simple path of $G$.

Let $s=w_1x_1+w_2x_2+\cdots+w_rx_r\in Im(\varphi_G)$ be such that
    $w_1< w_2$. Then 	${\Delta}_{\ol s}$ is acyclic.
\end{lemma}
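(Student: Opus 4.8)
The plan is to show that $\Delta_{\ol s}$ is a cone and then invoke the acyclicity of cones recalled above. The apex I will use is the vertex of the complex corresponding to the edge $e_2$, i.e. the element $2\in[n]$: concretely, I will prove that every face of $\Delta_{\ol s}$ can be enlarged by $2$. The starting point is to unwind the membership condition. Since $\operatorname{Im}(\varphi_G)$ is the set of nonnegative integer combinations of the vectors $\varphi_G(e_j)$, a subset $F\subseteq[n]$ lies in $\Delta_{\ol s}$ precisely when $s$ admits a representation $s=\sum_j a_j\,\varphi_G(e_j)$ with $a_j\in\N$ and $a_j\ge 1$ for every $j\in F$. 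Reading off the coordinate of $s$ at a vertex $x$ of $G$ shows that, in any such representation, the weighted degree $\sum_{e\ni x}a_e$ at $x$ equals the coefficient $w_x$ of $x$ in $s$.

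The key step exploits that $(e_1,e_2)=(x_1,x_2,x_3)$ is a path, so $d_G(x_2)=2$ and the only edges incident to $x_2$ are $e_1$ and $e_2$. Hence in every representation $s=\sum_j a_j\varphi_G(e_j)$ the coordinate at $x_2$ forces $a_1+a_2=w_2$, while the coordinate at $x_1$ forces $a_1\le w_1$ (because $e_1$ is one of the edges meeting $x_1$, and all $a_e\ge 0$). I then claim that $a_2\ge 1$ in every representation: if $a_2=0$ then $a_1=w_2$, so $w_1\ge a_1=w_2$, contradicting the hypothesis $w_1<w_2$. Thus the edge $e_2$ is forced to appear with positive coefficient in \emph{every} way of writing $s$ as a nonnegative combination of edge vectors.

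To assemble the cone, I would take any $F\in\Delta_{\ol s}$ together with a representation having $a_j\ge 1$ for $j\in F$; by the previous step this same representation already satisfies $a_2\ge 1$, so $a_j\ge 1$ for all $j\in F\cup\{2\}$, and therefore $F\cup\{2\}\in\Delta_{\ol s}$. Combined with the fact that $\Delta_{\ol s}\neq\emptyset$ (which holds because $s\in\operatorname{Im}(\varphi_G)$ yields $\emptyset\in\Delta_{\ol s}$), this is exactly the statement that $\Delta_{\ol s}=\{2\}*\Delta'$ is a cone with apex $2$, hence acyclic, as desired.

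The argument is short, so there is no serious analytic obstacle; the one thing to get right is matching the direction of the inequality to the correct apex. The hypothesis $w_1<w_2$ forces the presence of $e_2$ (and not $e_1$), precisely because it is $x_1$ whose degree is pinned below $w_2$; had the inequality been $w_3<w_2$ the symmetric argument would force $e_1$. I would also flag that this proof uses only that $x_2$ has degree $2$ with incident edges $e_1,e_2$, so the simplicity assumption $\mathcal N(x_1)\cap\mathcal N(x_3)=\{x_2\}$ plays no role in this particular lemma.
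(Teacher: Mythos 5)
Your argument is correct and is essentially the paper's own proof: both identify $\Delta_{\ol s}$ as a cone with apex the vertex corresponding to $e_2$ by reading off the coordinate of $s$ at $x_2$ (giving $a_1+a_2=w_2$) and at $x_1$ (giving $a_1\le w_1$), so that $a_2\ge w_2-w_1>0$ in every representation. Your closing observation that only $d_G(x_2)=2$ is used, and not the simplicity of the path, is also consistent with the paper's proof.
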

\begin{proof}
	Let $F$ be a facet of $\Delta_{\ol s}^G$. We claim that  $e_2\in F$. Indeed, by Remark \ref{r. facets}, for some positive integers $a_1, \ldots, a_r$ we have 	$$s = a_1\varphi(e_1)+a_2\varphi(e_2)+\cdots + a_n\varphi(e_n),\ \text{where}\ \ a_j=0 \ \ \text{if}\ \ j\notin F. $$
	Then 
	$$w_1x_1+w_2x_2+\cdots+w_rx_r = a_1(x_1+x_2)+a_2(x_2+x_3)+\cdots$$
	and $x_2$ does not appear in the rest of the sum. Thus
	$w_2=a_1+a_2$ and $a_1\le w_1$. This implies
	$$a_2=w_2-a_1\ge w_2-w_1>0 $$
	so, $e_2$ belongs to the facet $F$.
	Therefore $\Delta_{\ol s}^G$ is a cone and it only has trivial reduced homology groups. 
\end{proof}

\begin{lemma}\label{l.acyc2}
		Let $G=(V,E)$ be a graph. Let $(e_1,e_2,e_3)=(x_1,x_2,x_3,x_4)$ be a path of $G$. 
		Let $s=w_1x_1+w_2x_2+w_3x_3+\cdots+w_rx_r\in Im(\varphi_G)$ be such that ${\Delta}_{\ol s}^G$ is not acyclic. Then  $w_2= w_3$. 
\end{lemma}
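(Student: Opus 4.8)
The plan is to prove the contrapositive: assuming $w_2 \neq w_3$, I will show that $\Delta_{\ol s}^G$ is acyclic by exhibiting a single vertex contained in every facet, so that the complex is a cone. Reversing the path $(x_1,x_2,x_3,x_4)$ interchanges the roles of $(e_1,w_2)$ and $(e_3,w_3)$, so there is no loss of generality in assuming $w_2 < w_3$; the argument in the opposite case is identical after this relabelling. The whole strategy is the length-$3$ analogue of the computation in Lemma \ref{l.acyc1}.

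Next I would fix an arbitrary facet $F$ of $\Delta_{\ol s}^G$ and invoke Remark \ref{r. facets} to write $s = \sum_{i \in F} a_i \varphi_G(e_i)$ with each $a_i$ a positive integer, setting $a_i = 0$ for $i \notin F$. The key point is that $(x_1,x_2,x_3,x_4)$ is a path, so its interior vertices $x_2$ and $x_3$ have degree $2$: the only edges incident to $x_2$ are $e_1,e_2$, and the only edges incident to $x_3$ are $e_2,e_3$. Comparing the coefficients of $x_2$ and $x_3$ on the two sides of the expression for $s$ therefore yields exactly $w_2 = a_1 + a_2$ and $w_3 = a_2 + a_3$, since no other edge can contribute to these two vertices.

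Subtracting, the hypothesis $w_2 < w_3$ gives $a_3 - a_1 = w_3 - w_2 > 0$, and since $a_1 \geq 0$ this forces $a_3 > 0$, i.e. $e_3 \in F$. As $F$ was arbitrary, the vertex $3$ lies in every facet of $\Delta_{\ol s}^G$; hence any face may be enlarged by $3$ while staying in the complex, so $\Delta_{\ol s}^G$ is a cone with apex $3$ and thus has trivial reduced homology. This contradicts non-acyclicity and proves $w_2 = w_3$.

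I do not expect a serious obstacle here: the entire content is the degree-$2$ bookkeeping for the two interior vertices, which is robust and does not even require the path to be simple (unlike Lemma \ref{l.acyc1}, the simplicity of $p$ is never used). The only points demanding a little care are the reduction by the path-reversal symmetry and the observation that $e_1,e_2,e_3$ are genuinely distinct edges, guaranteed because the four path vertices are distinct, so that the two coefficient equations $w_2 = a_1+a_2$ and $w_3 = a_2+a_3$ are meaningful and independent.
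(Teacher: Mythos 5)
Your proof is correct and is essentially the paper's argument: the paper simply applies Lemma \ref{l.acyc1} to the two sub-paths $(e_2,e_1)=(x_3,x_2,x_1)$ and $(e_2,e_3)=(x_2,x_3,x_4)$, which is exactly your two symmetric cases, while you inline the same facet computation ($w_2=a_1+a_2$, $w_3=a_2+a_3$, hence a forced apex and a cone). The only cosmetic difference is that you derive the bound from the two interior vertices directly rather than citing the previous lemma twice.
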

\begin{proof}
It follows from Lemma \ref{l.acyc1} since $(e_2,e_1)=(x_3,x_2,x_1)$ and $(e_2,e_3)=(x_2,x_3,x_4)$ are a paths of $G$.
\end{proof}

We are now in the position to prove Theorem \ref{t. main 1}. We recall the statement.

\noindent {\bf Theorem \ref{t. main 1}.} {\it Let $G$ be a simple graph. Let $G/p$ be a even simple path contraction of $G$.  Let $q$ be a path in $G$ containing $p$  and $|q|\ge |p|+2$,  then
$$\beta_i(\K[G])= \beta_i(\K[G/p])\ \ for\ any\ i \ge 1.$$
}
\noindent{\bf Proof of Theorem \ref{t. main 1}.}
Set $G'=G/p$. We prove the statement for $q=(x_1,x_2,x_3,x_4,x_5)=(e_1,e_2,e_3,e_4)$  and $p=(x_2,x_3,x_4)=(e_2,e_3)$. Then, the statement follows by iterating such case.

Let $s=w_1x_1+w_2x_2+w_3x_3+w_4x_4+w_5x_5+ \bar x\in Im(\varphi_G)$ where $\bar x$ is a linear combination of the vertices with index greater than $5.$ 
 
 From Lemma \ref{l.acyc1} if either $w_2> w_1$ or $w_4> w_5$ then the simplicial complex  $\Delta_{s}^{G}$ is acyclic. From Lemma \ref{l.acyc2}, if the coefficients $w_2,w_3,w_4$ are not all the same then the simplicial complex  $\Delta_{s}^{G}$ is acyclic. 
 Thus, we assume $w_2=w_3=w_4=w$, $w\le w_1$   and $w\le w_5.$ Also, we assume $w>0$, since the case $w=0$ is trivial.

Hence the elements in $\varphi^{-1}_G(s)$ must be of type $ae_1+(w-a)e_2+ae_3+(w-a)e_4+\bar e$ for some $0 \le a\le w$  and where $\bar e$ is a linear combination of the edges in $E$ with index greater than 4.   

Thus we note that $$\Delta_{s}^{G}=\{1,3\}*\Delta_1 \cup \{2, 4\}*\Delta_2\cup \{1,2,3,4\}*\Delta_3$$ for some simplicial complex $\Delta_1,\Delta_2,\Delta_3$ on $\{5,\ldots, n\}$.

Let  $\chi:V\to V'$ be the function as in Definition \ref{d. edge contraction}, that is, $\chi(x_i)=x_i$ if $i\neq 2,3,4$ and, say, $\chi(x_2)=\chi(x_3)=\chi(x_4)=y\in [n]\setminus V$.
Then the image of $q$ under $\chi$ is a simple path in $G'$, precisely $(x_1,y,x_5)=(e_i,e_j),$ where $e_i=\{\chi(x_1),\chi(x_2)\} ,e_j=\{\chi(x_4),\chi(x_5)\} \in E'\setminus E.$

Then, in correspondence to the above $s\in Im(\varphi_G)$ we consider
 $s'=w_1x_1+wx_u+w_5x_5+ \bar x\in \N[V']$ and we note that $s'\in Im(\varphi_{G'})$. 
 Indeed, for each $ae_1+(w-a)e_2+ae_3+(w-a)e_4+\bar e\in \varphi^{-1}_{G}(s)$ we have 
 $\varphi_{G'}(ae_i+(w-a)e_j+\bar e)= s',$ with the same $\bar e$ as in $s$.
So we get 
$$\Delta_{s'}^{G'}=\{i\}*\Delta_1 \cup \{j\}*\Delta_2\cup \{i,j\}*\Delta_3.$$
Then, from Proposition \ref{p. m-v case 3} we get
$\beta_i(\K[G])\le \beta_i(\K[G/p])$ for any $i \ge 1,$ and, since Theorem~\ref{t. main 0}, we are done.\qed

\section{Edge contraction of special graphs}\label{s.edge contraction}
As shown in Example \ref{e. contr 3->2}, in a graph $G$ {\it an edge contraction}, i.e., the contraction of a single edge $e$, does not always produces comparable Betti numbers between $\K[G]$ and $\K[G/e]$. However, a particular class of graphs, introduced in the next definition, deserves a further investigation. 

\begin{definition}\label{d. connected by the edge e}
Let $G_1=(V_1,E_1)$ and $G_2=(V_2,E_2)$ be two simple connected graphs such that the vertices sets are  disjoint, i.e., $V_1\cap V_2=\emptyset$.
Let $x\in V_1, y\in V_2$  be two vertices and consider  $e=\{x,y\}$  an edge which connects $G_1$ and $G_2$.
Then, we say that the graph $G=(V_1\cup V_2, E_1\cup E_2\cup \{e\})$ is {\it  connected by the edge} $e$ and we write $G=G_1\stackrel{e}{-}G_2$. 
\end{definition}

We show that the contraction of the edge $e$ in a graph $G$ connected by $e$ preserves the number of minimal generators of $\K[G]$.

\begin{theorem}\label{t.main 2}
Let $G=G_1\stackrel{e}{-}G_2$ be a graph connected by the edge $e$. Then $$\beta_1(\K[G])=\beta_1(\K[G/e]).$$
\end{theorem}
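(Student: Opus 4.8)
The plan is to compute $\beta_1$ through the multigraded formula \eqref{l.betti}, which for $j=0$ gives $\beta_{1,\ol s}(\K[G]) = \dim_\K \tilde H_0(\Delta_s^G)$, i.e. $\beta_{1,\ol s}(\K[G])$ is the number of connected components of $\Delta_s^G$ minus one. Since $e$ is a bridge, contracting it is an \emph{odd} (length one) contraction, so Theorem \ref{t. main 0} does not apply and both inequalities must come from a homological comparison of the fibre complexes. Summing over multidegrees, it thus suffices to match, across $G$ and $G/e$, the total number of ``extra'' connected components of the complexes $\Delta_s^G$ and $\Delta_{s'}^{G/e}$. Throughout I write $V=V_1\sqcup V_2$, $E=E_1\sqcup E_2\sqcup\{e\}$ with $e=\{x,y\}$, $x\in V_1$, $y\in V_2$, and let $z$ be the vertex of $G/e$ obtained by identifying $x$ and $y$, so $E'=E_1\cup E_2$.

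First I would record, via Remark \ref{r. facets}, that $F\in\Delta_s^G$ exactly when there is an integer edge-weighting $c\in\N^E$ with $\varphi_G(c)=s$ and $\supp(c)\supseteq F$; thus $\Delta_s^G$ is the complex of subsets of supports of the fibre $\varphi_G^{-1}(s)\cap\N^E$. Because $e$ is a bridge, every such weighting splits as $c=(c^{(1)},c^{(2)},c_e)$ on $E_1$, $E_2$ and the bridge, and setting $k=c_e$ yields a disjoint decomposition of the fibre indexed by the bridge multiplicity $k\ge 0$, in which the $G_1$-part realises $x$-degree $s_x-k$ and the $G_2$-part realises $y$-degree $s_y-k$. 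The clean structural observation is that every weighting with $k\ge 1$ has $e$ in its support, so all the associated faces lie in the closed star $\ol{\mathrm{st}}_{\Delta_s^G}(e)$, a cone over $\{e\}$ and hence acyclic. Writing $\Delta_s^G=\ol{\mathrm{st}}(e)\cup\mathrm{del}(e)$ with intersection $\mathrm{lk}(e)$, the reduced Mayer--Vietoris sequence used in the proof of Lemma \ref{l.m-v} reduces $\tilde H_0(\Delta_s^G)$ to data of $\mathrm{del}(e)$ and $\mathrm{lk}(e)$, both built solely from the fibre structure of $\varphi_{G_1}$ and $\varphi_{G_2}$.

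Next I would run the same analysis for $G/e$, where $\varphi_{G/e}^{-1}(s')$ decomposes according to the split $j$ of the $z$-degree, the $G_1$-part realising $x$-degree $j$ and the $G_2$-part realising $y$-degree $s'_z-j$. The decisive point is that both families of complexes are assembled from the \emph{same} grid of products $\varphi_{G_1}^{-1}(\sigma_1,a)\times\varphi_{G_2}^{-1}(\sigma_2,b)$ indexed by $(a,b)\in\N^2$, with the peripheral $V_1\setminus\{x\}$- and $V_2\setminus\{y\}$-data $\sigma_1,\sigma_2$ fixed: the multidegrees of $G$ slice the grid along the diagonals $a-b=\mathrm{const}$ (the line selected by $s_x-s_y$, with $k$ marking the position), while those of $G/e$ slice it along the transverse antidiagonals $a+b=\mathrm{const}$ (selected by $s'_z$). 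Using Corollaries \ref{c. m-v case 1} and \ref{c. m-v case 2} and Proposition \ref{p. m-v case 3}, the contribution of each grid point to the reduced homology is governed by intersections $\Delta_1\cap\Delta_2$ of the $G_1$- and $G_2$-link complexes, which are symmetric in $a$ and $b$ and therefore read off identically whether one traverses the grid by diagonals or by antidiagonals.

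The main obstacle is exactly the passage to $\tilde H_0$: the Mayer--Vietoris corollaries identify $\tilde H_i$ with $\tilde H_{i-1}$ of an intersection only for $i>0$, so for zeroth homology one must track connected components by hand, watching the boundary of the grid (where $a=0$ or $b=0$, i.e. one side admits no bridge crossing) and the different roles of the bridge edge $e$ in $G$ versus the cut vertex $z$ in $G/e$ in gluing components together; this is precisely why the theorem is confined to $\beta_1$. Once the bijection of ``extra components'' between the diagonal and antidiagonal slicings is established grid point by grid point, summation over all multidegrees gives $\beta_1(\K[G])=\beta_1(\K[G/e])$. As a sanity check, the part of each ideal supported off $e$ is the common ideal $I_{G_1}+I_{G_2}$ (unchanged by the contraction, since it never sees $e$ or the identification), so that if $G_1$ or $G_2$ is bipartite there are no bridge-crossing generators at all and the two ideals coincide, while two odd cycles joined by $e$ produce exactly one mixed generator on each side.
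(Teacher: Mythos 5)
There is a genuine gap: your argument stops exactly where the theorem begins. You correctly set up the multigraded formula \eqref{l.betti}, the splitting of the fibre $\varphi_G^{-1}(s)$ by the bridge multiplicity $k$, and the observation that the diagonal slices ($a-b$ constant) for $G$ and the antidiagonal slices ($a+b$ constant) for $G/e$ are built from the same grid of products of $G_1$- and $G_2$-fibres. But the conclusion requires comparing $\sum_s\dim_\K\tilde H_0(\Delta_s^G)$ with $\sum_{s'}\dim_\K\tilde H_0(\Delta_{s'}^{G/e})$, and you yourself note that Lemma \ref{l.m-v}, Corollaries \ref{c. m-v case 1}--\ref{c. m-v case 2} and Proposition \ref{p. m-v case 3} only control $\tilde H_i$ for $i>0$. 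The number of connected components of a diagonal slice is a \emph{global} feature of how the pieces indexed by different $k$ glue along the shared $G_1$- and $G_2$-supports; it is not determined ``grid point by grid point,'' and there is no reason the count should match slice by slice --- only the grand totals agree. The sentence ``Once the bijection of extra components \ldots is established'' is therefore not a proof step but a restatement of the theorem in the new language, and nothing in the proposal establishes it.

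The paper's proof avoids all of this by staying at the level of generators: $\beta_1(\K[G])$ is the number of primitive even closed walks, and since $e$ is a bridge, every primitive walk of $G$ meeting $e$ has the rigid form $e\,\|\,w_1\,\|\,e\,\|\,w_2$ with $w_1,w_2$ odd closed walks in $G_1,G_2$ (your own sanity check at the end already contains this observation). Contraction sends such a walk to $w_1\|w_2$ in $G/e$, walks avoiding $e$ are untouched, and one checks directly --- by lifting a witness $f_v$ of non-primitivity from $G/e$ back to $\bar v=e\|v_1\|e\|v_2$ in $G$ and conversely --- that primitivity is preserved in both directions. This gives the bijection on minimal generators in a few lines, with no homology. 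If you want to salvage your approach, you would need an explicit combinatorial description of the connected components of $\Delta_s^G$ in terms of the $k$-strata (e.g.\ showing that all strata with $k\ge 1$ lie in a single component containing the cone point $e$, and identifying which $k=0$ components are isolated), and the analogous description for $\Delta_{s'}^{G/e}$ via the cut vertex $z$; that analysis is the actual content and is entirely absent.
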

\begin{proof}
Let $w=e||w_1||e||w_2$ be a  primitive even closed walk in $G$ where $w_1$ and $w_2$ are odd closed walk in $G_1$ and $G_2$ respectively. Then $w/e=w_1||w_2$ is a closed even walk in $G/e$. It is easy to check that the binomial corresponding to $w/e$ is $f_{w/e}=e_{w_1^+}e_{w_2^-}-e_{w_1^-}e_{w_2^+}$. Assume it is not primitive. By definition there exists in $I_{G/e}$ a binomial $f_v=e_{v^+}-e_{v^-}$ such that $e_{v^+}|_{e_{w_1^+}e_{w_2^-}}$ and $e_{v^-}|_{e_{w_1^-}e_{w_2^+}}$. We now construct a binomial in order to show that $f_w=e^2e_{w_1^+}e_{w_2^+}-e_{w_1^-}e_{w_2^-}$ is not primitive, a contradiction. If $v$ is an even walk either in $G_1$ or in $G_2$ we are done since $f_v$ will have the required property. Assume $v=v_1||v_2$ where $v_1$ is an odd walk in $G_1$ and $v_2$ in $G_2$, then $f_v=e_{v_1^+}e_{v_2^+}-e_{v_1^-}e_{v_2^-}$. We consider  $\bar v=e||v_1||e||v_2$, that is an even cycle in $G$, thus the corresponding binomial $f_{\bar v}= e^2e_{v_1^+}e_{v_2^-}-e_{v_1^-}e_{v_2^+}$ exhibits the non primitiveness of $f_w$.     
\end{proof}

It is natural to ask then when  the total Betti numbers of a graph $G=G_1\stackrel{e}{-}G_2$ connected by the edge $e$ are preserved by the contraction of $e$. We have the following property.

\begin{proposition}\label{p.bip}
Let $G=G_1\stackrel{e}{-}G_2$ be a graph connected by the edge $e$. If either $G_1$ or $G_2$ is bipartite then $\K[G]\cong \K[G/e]\cong \K[G_1]\bigotimes \K[G_2]$. So in particular $\beta_i(\K[G])=\beta_i(\K[G/e])$ for any $i\ge 0$.     
\end{proposition}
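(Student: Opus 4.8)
The plan is to show that, under the bipartiteness hypothesis, the connecting edge $e$ is \emph{inert}: it cannot occur in any minimal generator of $I_G$, and the very same combinatorial obstruction makes the new cut vertex of $G/e$ inert as well. By the symmetry $G_1\stackrel{e}{-}G_2=G_2\stackrel{e}{-}G_1$ I may assume without loss of generality that $G_1$ is bipartite. The starting point is the structural description of the primitive even closed walks that meet $e$, exactly as used at the beginning of the proof of Theorem~\ref{t.main 2}.

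First I would observe that $e$ is a bridge of $G$: since $G_1,G_2$ are connected and vertex-disjoint and $e$ is the only edge joining them, deleting $e$ disconnects $G$ into $G_1$ and $G_2$. Consequently, a primitive even closed walk $w$ of $G$ that uses $e$ must cross it an even number of times, and by \cite[Proposition 10.1.8]{V} it uses $e$ at most twice; hence it has the form $w=e\,||\,w_1\,||\,e\,||\,w_2$ with $w_1,w_2$ odd closed walks based at $x$ in $G_1$ and at $y$ in $G_2$ (any even closed sub-walk on one side would yield a binomial dividing $f_w$, contradicting primitivity). Since $G_1$ is bipartite it has no odd closed walk, so no such $w$ exists. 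Therefore every element of the universal Gröbner basis of $I_G$ comes from a primitive even closed walk lying entirely in $G_1$ or entirely in $G_2$, and in particular the variable $e$ appears in none of them. This gives $I_G=I_{G_1}+I_{G_2}$ inside $\K[E_1\cup E_2]\subseteq\K[E]$, with $e$ transcendental over this subring, so that
$$\K[G]=\K[E]/I_G\cong\big(\K[E_1\cup E_2]/(I_{G_1}+I_{G_2})\big)[e]\cong(\K[G_1]\otimes_\K\K[G_2])[e].$$

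Next I would run the identical argument for $G/e$. The contraction glues $G_1$ and $G_2$ at the single cut vertex $z$ (the common image of $x$ and $y$), and a primitive even closed walk of $G/e$ visiting both sides would again split at $z$ into an odd closed walk of $G_1$ and an odd closed walk of $G_2$, which is impossible since $G_1$ is bipartite. Thus $I_{G/e}=I_{G_1}+I_{G_2}$ in $\K[E_1\cup E_2]$, giving the genuine ring isomorphism $\K[G/e]\cong\K[G_1]\otimes_\K\K[G_2]$ (now with no spare variable). Finally, for the Betti numbers I would note that adjoining the free polynomial variable $e$ is a flat base change that tensors a minimal graded free resolution with $\K[e]$ while keeping it minimal, so it changes no graded Betti number; combined with the two isomorphisms this yields $\beta_i(\K[G])=\beta_i(\K[G/e])=\beta_i(\K[G_1]\otimes_\K\K[G_2])$ for all $i\ge0$.

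The hard part will be the combinatorial step: proving cleanly that, with one side bipartite, there are no primitive even closed walks crossing $e$ in $G$ (respectively crossing the cut vertex $z$ in $G/e$). Everything after that is routine bookkeeping — the identification of the tensor product with the quotient by $I_{G_1}+I_{G_2}$, and the harmless free variable $e$ that accounts for the slight abuse in writing $\K[G]\cong\K[G_1]\otimes\K[G_2]$ rather than $\K[G]\cong(\K[G_1]\otimes\K[G_2])[e]$.
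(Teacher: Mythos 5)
Your proof is correct and follows essentially the same route as the paper: both arguments show that a primitive even closed walk meeting both $G_1$ and $G_2$ would have to decompose into odd closed walks on each side (impossible when one side is bipartite), and conclude $I_G=I_{G_1}+I_{G_2}$ in disjoint variable sets. Your write-up is somewhat more careful than the paper's, which leaves implicit both the analogous argument for $G/e$ at the cut vertex and the role of the inert free variable $e$ in the isomorphism $\K[G]\cong(\K[G_1]\otimes_\K\K[G_2])[e]$.
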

\begin{proof}
Let $G_1$ be a bipartite graph. Then each closed even walk of $G$ containing edges in $G_1$ cannot be primitive. Indeed, an even walk $w$ of $G$ involving edges in $G_1$ and $G_2$ would be of type $w=w_1|| e|| w_2 || e$. So both  $w_1$ and $w_2$ must be even closed walks. Therefore $w$ is certainly not a primitive walk.  

Moreover, the primitive even walks of $G$ are either contained in $G_1$ or in $G_2$. Then $I_G=I_{G_1}+ I_{G_2}$ where the ideals $I_{G_1},I_{G_2}$ are generated by binomials in two different sets of variables. 
\end{proof}

\begin{remark}
    In the case of Proposition \ref{p.bip}, in analogy to \cite[Theorem 2.6]{FHKVT}, the graded Betti numbers of $\K[G]$ are then obtained by those of $\K[G_1]$and $\K[G_2]$ via the K\"{u}nneth formula. See for instance \cite[Theorem 3.4.(b)]{migliore2001bezout} for more details on the K\"{u}nneth formula of graded modules. 
\end{remark}

It is not yet clear when the contraction of $e=\{x,y\}$ in a graph connected by $e$ preserves the total Betti numbers. Also, we observe that the total Betti numbers of $G_1\stackrel{e}{-}G_2$ depends on the vertices $x\in G_1$ and $y\in G_2$, i.e., on how $G_1$ and $G_2$ are connected by $e$. The next example shows three different cases where the total Betti numbers and the edge contraction of the graphs $G_1\stackrel{e}{-}G_2$ depend on the choice of $x$ and $y$. 
\begin{example}\label{e.odd and even contr}
One can check that contracting the edge $e$ in the graph pictured in Figure~\eqref{fig.edgecontr}, the total Betti numbers do not change. The graph $G/e$ is connected by the edge $e'$ The contraction of the path $(e,e')$ pushes down some of the Betti numbers.

\noindent\begin{minipage}{0.3\textwidth}
		\begin{figure}[H]
	\centering
		\begin{tikzpicture}[scale=0.3]
		\coordinate (v1) at (0,0);
		\coordinate (v2) at (4,0);
		\coordinate (v3) at (8,0);
		\coordinate (v4) at (12,0);
\node[fill=white] (a1) at (10,1) {$e$};
		\coordinate (v5) at (16,0);	
\node[fill=white] (a1) at (14,1) {$e'$};
		\coordinate (v6) at (20,0);
		\coordinate (v8) at (2,2);
		\coordinate (v9) at (6,2);
		\coordinate (v10) at (14,-2);	
		\coordinate (v11) at (18,-2);
		\coordinate (v12) at (18,2);	
		\draw (v1)--(v2);
		\draw (v2)--(v3);
		\draw (v1)--(v3);
		\draw (v3)--(v4);
		\draw (v4)--(v5);
		\draw (v5)--(v6);
		\draw (v1)--(v8);
		\draw (v2)--(v8);	
		
		\draw (v2)--(v9);
		\draw (v3)--(v9);
		
		\draw (v5)--(v10);
		\draw (v11)--(v5);
		\draw (v11)--(v10);
		
		\draw (v5)--(v12);
		\draw (v6)--(v12);
		
		\fill[fill=white,draw =black] (v1) circle (0.3);
		\fill[fill=white,draw =black] (v2) circle (0.3);
		\fill[fill=white,draw =black] (v3) circle (0.3);	\fill[fill=white,draw =black] (v4) circle (0.3);
		\fill[fill=white,draw =black] (v5) circle (0.3);
		\fill[fill=white,draw =black] (v6) circle (0.3);
		\fill[fill=white,draw =black] (v8) circle (0.3);
		\fill[fill=white,draw =black] (v9) circle (0.3);
		\fill[fill=white,draw =black] (v10) circle (0.3);
		\fill[fill=white,draw =black] (v11) circle (0.3);
		\fill[fill=white,draw =black] (v12) circle (0.3);
		\end{tikzpicture}
	\caption{ }
	\label{fig.edgecontr}
\end{figure}
\end{minipage}%
\hfill%
\begin{minipage}{0.6\textwidth}\centering
\[
\begin{array}{l|cccc}
i & 0 & 1 & 2 & 3   \\
\hline
    \beta_i (\K[G]) & 1 & 6 & 9 & 4  \\[7pt]
\hline
    \beta_i (\K[G/e]) & 1 & 6 & 9 & 4  \\[7pt]
\hline
     \beta_i (\K[G/(e,e')]) & 1 & 6 & 8 & 3 \\
\end{array}
\]
\end{minipage}		

\vspace{7pt}

\noindent Contracting the edge $e$ in the graph $G$ in Figure \eqref{fig.edgecontr2}, the total Betti numbers go down. The further contraction of  $e'$ does not modify them anymore.

\noindent\begin{minipage}{0.3\textwidth}
		\begin{figure}[H]
	\centering
		\begin{tikzpicture}[scale=0.3]
		\coordinate (v1) at (6,-2);
		\coordinate (v2) at (4,0);
		\coordinate (v3) at (8,0);
		\coordinate (v4) at (12,0);
\node[fill=white] (a1) at (10,1) {$e$};
		\coordinate (v5) at (16,0);	
\node[fill=white] (a1) at (14,1) {$e'$};
		\coordinate (v6) at (20,0);
		\coordinate (v8) at (10,-2);
		\coordinate (v9) at (6,2);
		\coordinate (v10) at (14,-2);	
		\coordinate (v11) at (18,-2);
		\coordinate (v12) at (18,2);	
		\draw (v1)--(v3);
		\draw (v2)--(v3);
		\draw (v1)--(v3);
		\draw (v3)--(v4);
		\draw (v4)--(v5);
		\draw (v5)--(v6);
		\draw (v1)--(v8);
		\draw (v3)--(v8);	
		
		\draw (v2)--(v9);
		\draw (v3)--(v9);
		
		\draw (v5)--(v10);
		\draw (v11)--(v5);
		\draw (v11)--(v10);
		
		\draw (v5)--(v12);
		\draw (v6)--(v12);
		
		\fill[fill=white,draw =black] (v1) circle (0.3);
		\fill[fill=white,draw =black] (v2) circle (0.3);
		\fill[fill=white,draw =black] (v3) circle (0.3);	\fill[fill=white,draw =black] (v4) circle (0.3);
		\fill[fill=white,draw =black] (v5) circle (0.3);
		\fill[fill=white,draw =black] (v6) circle (0.3);
		\fill[fill=white,draw =black] (v8) circle (0.3);
		\fill[fill=white,draw =black] (v9) circle (0.3);
		\fill[fill=white,draw =black] (v10) circle (0.3);
		\fill[fill=white,draw =black] (v11) circle (0.3);
		\fill[fill=white,draw =black] (v12) circle (0.3);
		\end{tikzpicture}
	\caption{ }
	\label{fig.edgecontr2}
\end{figure}
\end{minipage}%
\hfill%
\begin{minipage}{0.6\textwidth}\centering

\[
\begin{array}{l|cccc}
i & 0 & 1 & 2 & 3   \\
\hline
    \beta_i (\K[G]) & 1 & 6 & 9 & 4  \\[7pt]
\hline
    \beta_i (\K[G/e]) & 1 & 6 & 8 & 3  \\[7pt]
\hline
     \beta_i (\K[G/(e,e')]) & 1 & 6 & 8 & 3 \\
\end{array}
\]
\end{minipage}		
		
\vspace{7pt}

The contractions of $e$ and $e'$ in the graph $G$ in Figure \eqref{fig.edgecontr3} do not change the total Betti numbers.

\noindent\begin{minipage}{0.3\textwidth}
		\begin{figure}[H]
	\centering
		\begin{tikzpicture}[scale=0.3]
		\coordinate (v1) at (0,0);
		\coordinate (v2) at (4,0);
		\coordinate (v3) at (8,0);
		\coordinate (v4) at (12,0);
\node[fill=white] (a1) at (10,1) {$e$};
		\coordinate (v5) at (16,0);	
\node[fill=white] (a1) at (14,1) {$e'$};
		\coordinate (v6) at (20,0);
		\coordinate (v8) at (2,2);
		\coordinate (v9) at (6,2);
		\coordinate (v10) at (24,0);	
		\coordinate (v12) at (18,2);	
		\coordinate (v13) at (22,2);
		\draw (v1)--(v2);
		\draw (v2)--(v3);
		\draw (v1)--(v3);
		\draw (v3)--(v4);
		\draw (v4)--(v5);
		\draw (v5)--(v6);
		\draw (v1)--(v8);
		\draw (v2)--(v8);	
		
		\draw (v2)--(v9);
		\draw (v3)--(v9);
		
		\draw (v6)--(v10);
		\draw (v13)--(v6);
		\draw (v13)--(v10);
		
		\draw (v5)--(v12);
		\draw (v6)--(v12);
		
		\fill[fill=white,draw =black] (v1) circle (0.3);
		\fill[fill=white,draw =black] (v2) circle (0.3);
		\fill[fill=white,draw =black] (v3) circle (0.3);	\fill[fill=white,draw =black] (v4) circle (0.3);
		\fill[fill=white,draw =black] (v5) circle (0.3);
		\fill[fill=white,draw =black] (v6) circle (0.3);
		\fill[fill=white,draw =black] (v8) circle (0.3);
		\fill[fill=white,draw =black] (v9) circle (0.3);
		\fill[fill=white,draw =black] (v10) circle (0.3);
		\fill[fill=white,draw =black] (v12) circle (0.3);
		\fill[fill=white,draw =black] (v13) circle (0.3);
		\end{tikzpicture}
	\caption{ }
	\label{fig.edgecontr3}
\end{figure}
\end{minipage}%
\hfill%
\begin{minipage}{0.6\textwidth}\centering
\[
\begin{array}{l|cccc}
i & 0 & 1 & 2 & 3   \\
\hline
    \beta_i (\K[G]) & 1 & 6 & 9 & 4  \\[7pt]
\hline
    \beta_i (\K[G/e]) & 1 & 6 & 9 & 4  \\[7pt]
\hline
     \beta_i (\K[G/(e,e')]) & 1 & 6 & 9 & 4 \\
\end{array}
\]
\end{minipage}				
		
\end{example}

Example \ref{e.odd and even contr} suggests several additional questions. 
\begin{question}\label{q. edge connession Betti numbers}
Let $G$ be a graph connected by the edge $e$. Is $\beta_i(\K[G])\ge\beta_i(\K[G/e])?$
\end{question}
\begin{question}\label{q. edge connession equal Betti numbers}
Let $G$ be a graph connected by the edge $e$. What conditions guarantee $\beta_i(\K[G])=\beta_i(\K[G/e])?$
\end{question}
\begin{question}
Let $G$ be a graph connected by the edge $e$. Is $\K[G]$ Cohen-Macaulay if and only if $\K[G/e]$ is Cohen-Macaulay?
\end{question}

In particular it is interesting to look at a special class of graphs which includes the graph in Figure \eqref{fig.edgecontr3}.

We call  {\it sequence of $n$ triangles}, see Figure \eqref{fig:sequence of triangles},
the graph $T_n=(V,R)$ with vertex set $$V=\{x_1, \ldots, x_{2n+1}\}$$
and edge set
$$E=\left\{\{x_{2i-1},x_{2i}\},\{x_{2i},x_{2i+1}\},\{x_{2i+1},x_{2i-1}\}\ |\ 1\le i\le n \right\}.$$

\begin{figure}[ht]
    \centering
		\begin{tikzpicture}[scale=0.3]
		\coordinate (v1) at (0,0);
		\coordinate (v2) at (4,0);
		\coordinate (v3) at (8,0);
		\coordinate (v5) at (12,0);	
		\coordinate (v6) at (16,0);
		\coordinate (v8) at (2,2);
		\coordinate (v9) at (6,2);
		\coordinate (v10) at (20,0);	
		\coordinate (v12) at (14,2);	
		\coordinate (v13) at (18,2);
		\draw (v1)--(v2);
		\draw (v2)--(v3);
		\draw (v1)--(v3);
		\draw[dotted] (v3)--(v5);
		\draw (v5)--(v6);

		\draw (v1)--(v8);
		\draw (v2)--(v8);	
		
		\draw (v2)--(v9);
		\draw (v3)--(v9);
		
		\draw (v6)--(v10);
		\draw (v13)--(v6);
		\draw (v13)--(v10);
		
		\draw (v5)--(v12);
		\draw (v6)--(v12);
		
		\fill[fill=white,draw =black] (v1) circle (0.3);
		\fill[fill=white,draw =black] (v2) circle (0.3);
		\fill[fill=white,draw =black] (v3) circle (0.3);	
		\fill[fill=white,draw =black] (v5) circle (0.3);
		\fill[fill=white,draw =black] (v6) circle (0.3);
		\fill[fill=white,draw =black] (v8) circle (0.3);
		\fill[fill=white,draw =black] (v9) circle (0.3);
		\fill[fill=white,draw =black] (v10) circle (0.3);
		\fill[fill=white,draw =black] (v12) circle (0.3);
		\fill[fill=white,draw =black] (v13) circle (0.3);
		\end{tikzpicture}
    \caption{The graph $T_n$, a sequence of $n$ triangles.}
    \label{fig:sequence of triangles}
\end{figure}
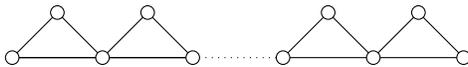

The graph $T_1$ is a cycle of length three, so its toric ideal is the zero ideal.
  
\begin{remark}
Computer experiments suggest the following properties.
\begin{itemize}
    \item[(a)]  The algebra $\K[T_n]$ is Cohen-Macaulay of Krull dimension $n-1$, for any $n\ge 2$.
    \item[(b)] Let $T_m$ and $T_n$ be two graphs sequence of triangles. Let $x\in V(T_m)$ and $y\in V(T_n)$ be vertices of degree $2$ having a neighbor of degree $2$. Set $e=\{x,y\}.$  
Then the algebra $\K[T_n\stackrel{e}{-}T_m]$ is Cohen-Macaulay, for any $n$ and $m$.
    \item[(c)] Let $T_m$ and $T_n$ be two graphs sequence of triangles. Let $x\in V(T_m)$ and $y\in V(T_n)$ be two vertices of degree $2$ having a neighbor of degree $2$. Set $e=\{x,y\}.$  Then
$$\beta_i(\K[T_n\stackrel{e}{-}T_m])=\beta_i(\K[T_n\stackrel{e}{-}T_m/e])=\beta_i(\K[T_{n+m}])=\beta_i(\K[in(T_{n+m})])$$ for any $n$ and $m$, where $in(T_{n+m})$ denotes the initial ideal of $T_{n+m}$ with respect the lexicographic order induced by $e_1>e_2> \cdots> e_{3n}$. 
\end{itemize}

\end{remark}


\end{document}